\documentclass{amsart}

%\input xy
%\xyoption{all}
%\usepackage{showlabels}
\numberwithin{equation}{section}

\usepackage{amsmath}
\usepackage{amsfonts}
\usepackage{amsthm}
\usepackage{amssymb}
\usepackage{bm}
\usepackage[applemac]{inputenc}
\usepackage{enumerate}
\bibliographystyle{alpha}

\newtheoremstyle{mystyle}{}{}{\slshape}{2pt}{\scshape}{.}{ }{} 

\newtheorem{thm}{Theorem}[section]
\newtheorem{cor}[thm]{Corollary}

\newtheorem{prop}[thm]{Proposition}
\newtheorem{lemme}[thm]{Lemma}
\newtheorem{fait}[thm]{Fact}

\newtheorem{question}[thm]{Question}

\theoremstyle{definition}
\newtheorem{defi}[thm]{Definition}
\theoremstyle{mystyle}
\newtheorem{ex}[thm]{Exemple}

\theoremstyle{remark}

\newcommand{\monster}{\mathcal U}

\newcommand{\ignore}[1]{}

\DeclareMathOperator{\tp}{tp}
\DeclareMathOperator{\Lstp}{Lstp}

\DeclareMathOperator{\dcl}{dcl}

\DeclareMathOperator{\Aut}{Aut}
\DeclareMathOperator{\Autf}{Autf}

\makeatletter

\def\indsym#1#2{%
 \setbox0=\hbox{$\m@th#1x$}%
 \kern\wd0%
 \hbox to 0pt{\hss$\m@th#1\mid$\hbox to 0pt{$\m@th#1^{#2}$\hss}\hss}%
 \lower.9\ht0\hbox to 0pt{\hss$\m@th#1\smile$\hss}%
 \kern\wd0}
\newcommand{\ind}[1][]{\mathop{\mathpalette\indsym{#1}}}

\def\nindsym#1#2{%
 \setbox0=\hbox{$\m@th#1x$}%
 \kern\wd0%
 \hbox to 0pt{\hss$\m@th#1\not$\kern1.4\wd0\hss}
 \hbox to 0pt{\hss$\m@th#1\mid$\hbox to 0pt{$\m@th#1^{#2}$\hss}\hss}%
 \lower.9\ht0\hbox to 0pt{\hss$\m@th#1\smile$\hss}%
 \kern\wd0}

\author{Pierre Simon}

\thanks{Partially supported by ValCoMo (ANR-13-BS01-0006), NSF (grant DMS
1665491), and the Sloan foundation.}

\address{Pierre Simon\\
UC Berkeley\\
Mathematics Department, Evans Hall\\
Berkeley, CA, 94720-3840\\
USA}

\email{pierre.simon@berkeley.edu}

\urladdr{http://www.normalesup.org/\textasciitilde{}simon/}

\title{On amalgamation in NTP$_2$ theories and generically simple generics}

\begin{document}

\begin{abstract}
We prove a couple of results on NTP$_2$ theories. First, we prove an amalgamation statement and deduce from it that the Lascar distance over extension bases is bounded by 2. This improves previous work of Ben Yaacov and Chernikov. We propose a line of investigation of NTP$_2$ theories based on S1 ideals with amalgamation and ask some questions. We then define and study a class of groups with generically simple generics, generalizing NIP groups with generically stable generics.
\end{abstract}
\maketitle
\section*{Introduction}

The class of NTP$_2$ theories contains both simple and NIP theories. It is probably the largest class where forking is sufficiently well behaved to be taken seriously. A couple of important facts are known: over extension bases, forking equals dividing (\cite{CherKapl}) and the non-forking ideal is S1 (\cite{CherBY}). In addition, some theorems on groups generalizing similar results for simple and NIP theories have been proved: Hempel and Onshuus \cite{HempOn} construct definable envelopes for abelian and solvable subgroups; \cite{groups_ntp} studies chain conditions and \cite{PRC} sets the foundations for the theory of definably amenable NTP$_2$ groups. More recently \cite{nipinntp} explores analogues of some NIP-like phenomena.

After a first section of preliminaries, the second section of this paper improves some results from \cite{CherBY}: we give a stronger, more natural, amalgamation theorem and, using an argument from Ita\" i Ben Yaacov, deduce from it that Lascar distance is bounded by 2 over extension bases (answering a question from \cite{CherBY}). We also speculate on a strategy for developing the theory of NTP$_2$. We observe that in simple theories, one usually works with a type $p$ over a small set and consider its non-forking extensions all at once. In NIP however, we prefer to fix some global non-forking (or invariant) extension $\tilde p$ of $p$ and study it, possibly using compactness of the space of non-forking extensions at the end of our construction to obtain a result on $p$ itself. Our idea is that in NTP$_2$, one would have to do a mixture of those two things and we suggest that the class of non-forking S1 ideals with amalgamation will replace the ideal of non-forking extensions in simple theories and the use of global invariant types in NIP.

We then turn our attention to definable groups. In the NIP setting, groups with an invariant measure play an important role. Some results concerning them have been generalized to NTP$_2$ in \cite{PRC}. In this paper, we pursue this enterprise by generalizing a subclass: that of groups with a generically stable generic. Such groups play an important role in Hrushovski's study of metastable groups \cite{metastableGr}. Transported to NTP$_2$, the condition becomes that of having a \emph{generically simple generic}. We show that under this assumption all forking-generic types are generically simple and non-forking over any extension base. We leave open the questions of generalizing the classes of fsg groups and compactly dominated groups. There are some natural candidates for these, but we were not able to prove convincing statements about them.

\subsection*{Acknowledgments}

We would like to thank Ita\"i Ben Yaacov for proving and allowing us to include here Theorem \ref{th_Lascar} on Lascar distance. Thanks also to Itay Kaplan for sharing some notes he had written, which helped in writing this paper.

\section{Preliminaries}

Our notations are standard. We work in a complete theory $T$ which has a monster model $\monster$. Usually, $A,B,C,\ldots$ will denote small subsets of $\monster$ and $M,N,\ldots$, small submodels. The group of automorphisms of $\monster$ fixing $A$ pointwise is denoted by $\Aut(\monster/A)$. We will often assume that $T$ is NTP$_2$. For definitions and basic facts about this condition see \cite{CherKapl}. We will actually never use the definition of NTP$_2$, but only certain properties that we recall here.

A subset $A$ of the monster model is an \emph{extension base} if no type $p\in S(A)$ forks over $A$. It is proved in \cite{CherKapl} that if $A$ is an extension base in an NTP$_2$ theory, then forking and dividing over $A$ coincide.

We use the notation $a\ind_C b$ to mean that $\tp(a/Cb)$ does not fork over $C$. We know from \cite{CherKapl} that if $C$ is an extension base, then this relation satisfies extension on both sides: If $a\ind_C b$ (resp. $a\ind_C b$) and $d$ is any tuple, then there is $d'\equiv_{Ca} d$ such that $d'\ind_C b$ (resp. $b\ind_C d'$). Also, in any theory, non-forking satisfies transitivity: if $a\ind_{Cd} b$ and $d\ind_C b$, then $ad\ind_C b$, as well as base monotonicity: if $a\ind_C bd$, then $a\ind_{Cd} b$.

A \emph{Morley sequence} over $A$ is a sequence $(a_i:i<\omega)$ which is $A$-indiscernible and such that $a_i \ind_A a_{<i}$ for all $i<\omega$. 
Recall that Lascar equivalence over a set $A$ is defined as the finest bounded $A$-invariant equivalence relation. A class of this equivalence relation is called a Lascar strong type. We let $\Lstp(a/A)$ denote the Lascar strong type of $a$ over $A$ and $\equiv_A^L$ denote equality of Lascar strong types over $A$. If $a,b$ have the same Lascar strong type over $A$, then there are $a=a_0,a_1,\ldots,a_n=b$ such that $(a_i,a_{i+1})$ starts an $A$-indiscernible sequence for all $i$. The minimal such $n$ is called the Lascar distance of $a$ and $b$ over $A$ and denoted $d_A(a,b)$.

\begin{fait}[\cite{CherBY},Theorem 3.3]\label{f_amal}
Let $T$ be NTP$_2$ and let $A$ be an extension base. Assume that $c\ind_A ab$, $a\ind_A bb'$ and $b\equiv^ L_A b'$. Then there is $c'$ such that $c' \ind_A ab'$, $c'a \equiv_A ca$ $c'b' \equiv_A cb$.
\end{fait}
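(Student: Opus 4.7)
The statement reduces cleanly to proving $b\equiv_{Aa} b'$: granting this, any $\tau\in\Aut(\monster/Aa)$ with $\tau(b)=b'$ yields the desired $c':=\tau(c)$. Indeed, $c'a=\tau(ca)\equiv_A ca$ and $c'b'=\tau(cb)\equiv_A cb$ since $\tau$ fixes $A$, while applying $\tau$ to $c\ind_A ab$ gives $c'\ind_A ab'$. So all the work lies in deriving $b\equiv_{Aa} b'$ from the hypotheses.

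I would prove this by induction on $n=d_A(b,b')$. For the base case $n=1$, take an $A$-indiscernible sequence $I=(b_i)_{i<\omega}$ with $b_0=b$ and $b_1=b'$. Using $a\ind_A bb'$ and the availability of extension over the extension base $A$, first replace $I$ (by an $Abb'$-automorphism coming from a non-forking realization of $\tp(a/Abb')$ extended over $AI$) so that moreover $a\ind_A I$. Then I would upgrade to $Aa$-indiscernibility: extend $I$ to a very long $A$-indiscernible sequence $\tilde I$ with $a\ind_A \tilde I$ by iterated extension, and apply Ramsey to extract an $Aa$-indiscernible subsequence $J$. Then $J$ has the same $A$-EM-type as $I$, and using NTP$_2$ one can arrange that $J$ starts with $(b,b')$ while keeping the same $a$ (rather than a mere $A$-conjugate $a^\sharp$ of $a$); from such an $Aa$-indiscernible sequence starting at $(b,b')$ we read off $b\equiv_{Aa} b'$.

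For the inductive step, given a Lascar chain $b=b_0,\ldots,b_n=b'$, use extension of $\tp(a/Abb')$ over the whole tuple $(b_0,\ldots,b_n)$, and transport back by an $Abb'$-automorphism, to replace the chain by one with the same Lascar-distance structure and satisfying $a\ind_A(b_0,\ldots,b_n)$. Monotonicity then yields $a\ind_A b_ib_{i+1}$ for each $i$, so the base case applies to every adjacent pair to give $b_i\equiv_{Aa} b_{i+1}$; transitivity gives $b\equiv_{Aa} b'$.

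The main obstacle is the NTP$_2$ step inside the base case: Ramsey produces an $Aa$-indiscernible sequence but a priori only up to an $A$-automorphism that may displace $a$ to a conjugate $a^\sharp\equiv_A a$. Without a further ingredient one only gets $b\equiv_{Aa^\sharp} b'$ for some conjugate, which is not what the conclusion demands; NTP$_2$, together with the independence $a\ind_A I$, is precisely what allows reconciling $a^\sharp$ back to $a$ by an $AI$-automorphism (a form of uniqueness of non-forking extensions along indiscernible sequences in this weakened setting). This is the delicate point on which the whole amalgamation rests.
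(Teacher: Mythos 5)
This statement is imported as a Fact from \cite{CherBY} (Theorem 3.3 there); the paper gives no proof of it, so there is nothing internal to compare against. Judged on its own terms, your proposal has a fatal gap at the very first step: the reduction to $b\equiv_{Aa}b'$ is a reduction to a false statement. If $b\equiv_{Aa}b'$ held you would get the much stronger conclusion $c'ab'\equiv_A cab$, whereas the Fact only asserts the two separate conditions $c'a\equiv_A ca$ and $c'b'\equiv_A cb$ --- this weakening is the whole point of an independence/amalgamation theorem. Concretely, take $T$ the random graph (simple, hence NTP$_2$) and $A=\emptyset$, which is an extension base with trivial forking: every type over every set is non-forking over $\emptyset$, and any two vertices start an indiscernible sequence (a complete or an empty infinite graph), so any two vertices $b,b'$ satisfy $b\equiv^L_\emptyset b'$ with Lascar distance $1$. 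Now choose $a$ adjacent to $b$ but not to $b'$. All the hypotheses of the Fact hold (with any $c$), yet $b\not\equiv_{a}b'$. So no argument, however clever, can derive $b\equiv_{Aa}b'$ from $a\ind_A bb'$ and $b\equiv^L_A b'$.

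The failure is located exactly at the point you flag as ``delicate'': after extracting an $Aa$-indiscernible subsequence by Ramsey, the sequence is indiscernible over a conjugate $a^\sharp$ of $a$, and there is in general no way to reconcile $a^\sharp$ with $a$ --- the counterexample above shows this is not a technical obstacle but a genuine impossibility, since $a\ind_A I$ does not force $I$ to be indiscernible over $Aa$ even in the tamest theories. The correct proof (in \cite{CherBY}) does not pass through moving $b$ to $b'$ over $Aa$ at all; it amalgamates the two partial types $\tp(c/Aa)$ and the $b\mapsto b'$ transport of $\tp(c/Ab)$ directly, using the S1 property of the forking ideal and an array/indiscernible-sequence argument specific to NTP$_2$. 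If you want a feel for the right shape of argument, look at how the present paper uses Fact \ref{fact_s1} together with Morley sequences in the Proposition opening Section 2 and in Theorem \ref{th_amalg}: the consistency of $\phi(x;a)\wedge\psi(x;b')$ is obtained by wideness of a conjunction along an indiscernible sequence, not by finding an automorphism over $Aa$ carrying $b$ to $b'$.
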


%Define Lascar and KP strong types
%Morley sequences
%
%\begin{fait}[\cite{}]\label{f_bdd}
%Let $T$ be NTP$_2$ and let $A$ be an extension base. Then over $A$, Lascar strong-type is equal to KP-strong type.
%\end{fait}
The following lemma will be useful in the next section.

\begin{lemme}\label{lem_lascarext}
Let $T$ be NTP$_2$ and let $A$ be an extension base. Let $a \ind_A b$ and $c\in \monster$. Then there is $ac'\equiv_A^L ac$ such that $ac'\ind_A b$.
\end{lemme}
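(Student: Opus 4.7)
My plan is to combine left extension (available because $A$ is an extension base and $a \ind_A b$) with the amalgamation Fact~\ref{f_amal}, bridging the gap between equality of types and Lascar equivalence via an auxiliary Morley sequence.

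First, I would apply left extension to $a \ind_A b$ and the tuple $c$: this produces $c_1 \equiv_{Aa} c$ with $a c_1 \ind_A b$. This gives $a c_1 \equiv_A a c$ together with the required non-forking condition, but only equality of types over $A$ is guaranteed, not the Lascar equivalence demanded by the statement.

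The main work is then to promote $\equiv_A$ to $\equiv^L_A$. For this I would build a Morley sequence $(b_i)_{i<\omega}$ of $\tp(b/Aa)$ over $Aa$ starting with $b_0 = b$ --- possible because $a \ind_A b$ implies that $\tp(b/Aa)$ does not fork over $A$ --- so that the sequence is $Aa$-indiscernible, each $b_i$ is Lascar equivalent to $b$ over $A$, and $b_1 \ind_A ab$. After enhancing the independence of $c_1$ by one more use of extension to some $c_2 \equiv_{Aa} c$ with $c_2 \ind_A abb_1$, I would apply Fact~\ref{f_amal} with $c_2, a, b_1, b$ in the roles of $c, a, b, b'$ respectively. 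The hypotheses are exactly what we have arranged: $c_2 \ind_A ab_1$, $a \ind_A b_1 b$ (from the Morley property of $(b_i)$), and $b_1 \equiv^L_A b$. The Fact then produces $c''$ with $c'' \ind_A ab$, $c''a \equiv_A c_2 a = ca$, and $c''b \equiv_A c_2 b_1$. In particular $ac'' \ind_A b$ and $ac'' \equiv_A ac$; the $Aa$-indiscernibility of $(b_i)$ allows one to patch together an $A$-indiscernible chain witnessing the Lascar equivalence $ac \equiv^L_A ac''$, so that $c' = c''$ works.

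The main obstacle is the bookkeeping in the amalgamation step: one has to arrange all the independence hypotheses of Fact~\ref{f_amal} to hold simultaneously --- in particular $a \ind_A bb_1$, which genuinely uses that $(b_i)$ is built as a Morley sequence over $Aa$ --- and then thread the Lascar equivalence through the Morley sequence to close the chain. The asymmetry of non-forking in NTP$_2$, as opposed to simple theories, means these steps require care, and it is exactly the strength of Fact~\ref{f_amal} (amalgamation of Lascar-equivalent copies) that is needed to get from $\equiv_A$ to $\equiv^L_A$.
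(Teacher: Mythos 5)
There is a genuine gap --- in fact several --- and the approach does not go through as described. The decisive problem is at the end: Fact~\ref{f_amal} concludes only $c''a\equiv_A c_2a$ and $c''b\equiv_A c_2b_1$, i.e.\ equality of \emph{types} over $A$, not of Lascar strong types. So after the amalgamation step you are exactly where you were after the first use of left extension: you have $c''$ with $ac''\equiv_A ac$ and $ac''\ind_A b$, but nothing forces $ac''\equiv_A^L ac$. The claim that the $Aa$-indiscernibility of $(b_i)$ lets you ``patch together an $A$-indiscernible chain'' from $ac$ to $ac''$ is not substantiated: indiscernibility of the $b_i$'s controls the Lascar strong types of the tuples $ab_i$, not of $c''$, and no chain is exhibited. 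Two earlier steps also fail, both for the same underlying reason, namely that non-forking is not symmetric in NTP$_2$ (this asymmetry is precisely what Section 3 of the paper is about). First, $a\ind_A b$ says that $\tp(a/Ab)$ does not fork over $A$; it does \emph{not} imply that $\tp(b/Aa)$ does not fork over $A$, which is what you would need to start a Morley sequence over $Aa$ at $b_0=b$ --- note that the Proposition immediately following this lemma in the paper assumes $b\ind_A a$, not $a\ind_A b$, exactly when it runs this construction. Second, the hypothesis $a\ind_A b_1b$ of Fact~\ref{f_amal} does not follow from the Morley property of $(b_i)$, which only yields independences of the form $b_1\ind_A ab$, in the wrong direction.

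The paper's proof is entirely different and does not use Fact~\ref{f_amal} at all. Since Lascar equivalence over $A$ is bounded, one can choose a \emph{set} $C$ of tuples such that $ac'\equiv_A ac$ for every $c'\in C$ and $\{\Lstp(ac'/A):c'\in C\}$ exhausts all Lascar strong types of tuples $ac'$ with $ac'\equiv_A ac$. Left extension applied to $C$ as a single tuple gives $C'\equiv_{Aa}C$ with $aC'\ind_A b$; an automorphism over $Aa$ permutes the Lascar strong types in question, so $\{\Lstp(ac'/A):c'\in C'\}$ is still exhaustive and in particular contains $\Lstp(ac/A)$. Any $c'\in C'$ realizing it works. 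The moral is that the upgrade from $\equiv_A$ to $\equiv_A^L$ is obtained by saturating over the boundedly many Lascar strong types \emph{before} applying extension, not by amalgamating afterwards.
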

\begin{proof}
Let $C$ be a set of tuples of same size as $c$ such that for any $c'\in C$, $ac'\equiv_A ac$ and the set $\{\Lstp(ac'/A) : c'\in C\}$ is maximal, that is contains all possible Lascar strong types over $A$ of some $ac'$, with $ac'\equiv_A ac$. By left-extension, there is $C'\equiv_{Aa} C$ such that $aC' \ind_A b$. Then $\{\Lstp(ac'/A) : c'\in C'\}$ is also maximal, hence we can find the $c'$ we are looking for in $C'$.
%As over $A$, Lascar-strong type is equal to KP-strong type, let $\pi(x;y)$ be the partial type (over $\monster$) expressing that $xy\equiv_A^L ca$. If we could not find such a $c'$, there would be by compactness a finite set $\{\psi_i(x,a,b):i<n\}$ of formulas such that each $\psi_i(x,y;b)$ forks over $A$ and $\pi(x;a)\vdash \bigvee_{i<n} \psi_i(x,a,b)$. As forking equals dividing over $A$, the formula $\psi(x,y;b):=\bigvee_{i<n} \psi_i(x,y;b)$ divides over $A$. Let $(b_i:i<\omega)$ witness dividing, with $b_0=b$. As $a\ind_A b$, we can assume that the sequence $(b_i:i<\omega)$ is indiscernible over $Aa$. Now for each $i$, there is a Lascar-strong automorphism over $A$, say $\sigma_i$, sending $ab$ to $ab_i$. Then $\sigma_i$ preserves $\pi(x,y)$, so we have $\pi(x;a)\vdash \psi(x,a,b_i)$ for each $i$. This implies that $\pi(x;a)$ is inconsistent, which is a contradiction.
\end{proof}

\subsection{Measures and ideals}

A (Keisler) measure $\mu(x)$ on a definable set $D$ over $A$ is a finitely additive probability measure on $A$-definable subsets of $D$ (in the variable $x$). Measures play an important role in NIP theories, through invariant measures on groups, but also for the general theory (for instance distality can be defined via properties of measures). In NTP$_2$ theories, we believe a similar role will be played by ideals.

By an ideal, we always mean an ideal on the boolean algebra of definable sets over some $A$. If $I$ is such an ideal, we say that a type $p$ is $I$-wide if it does not imply a formula in $I$.

A measure $\mu(x)$ over $\monster$ is $M$-invariant if $\mu(\phi(x;a))$ depends only on $\tp(a/M)$, equivalently $\mu$ is invariant under $\Aut(\monster/M)$. Similarly, an ideal is $M$-invariant if it is invariant under $\Aut(\monster/M)$. The following definition comes from \cite{hr_appr}.

\begin{defi}
Let $I$ be an $M$-invariant ideal on definable sets. We say that $I$ is S1 if given an $M$-indiscernible sequence $(a_i:i<\omega)$ and a formula $\phi(x;y)$, if $\phi(x;a_0)$ is $I$-wide, then so is $\phi(x;a_0)\wedge \phi(x;a_1)$.
\end{defi}

Note that one then has the stronger property that the full partial type $\{\phi(x;a_i):i<\omega\}$ is $I$-wide. It follows that if $I$ is $A$-invariant and S1, then $I$ contains all formulas which divide over $A$.

If $\mu$ is an $M$-invariant measure, then we can associate to it an ideal $I_\mu$ on definable sets by defining $X\in I_\mu$ if $\mu(X)=0$. Then $I_\mu$ is S1 (see for example \cite{hr_appr} or \cite[Lemma 7.5]{NIPbook}). Another source of S1 ideals comes from the following important fact.

\begin{fait}[\cite{CherBY}, Theorem 2.9]\label{fact_s1}
Let $T$ be NTP$_2$ and let $A$ be an extension base. Then the ideal of formulas that fork over $A$ is ($A$-invariant and) S1.
\end{fait}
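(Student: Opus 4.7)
The plan is to use that $A$ being an extension base in an NTP$_2$ theory implies forking equals dividing over $A$, and then to derive a contradiction to NTP$_2$ from failure of the S1 condition. The $A$-invariance of the non-forking ideal is immediate from the $A$-invariance of the notion ``forks over $A$'', so it only remains to verify S1: given an $A$-indiscernible sequence $(a_i)_{i<\omega}$ and a formula $\phi(x;y)$ with $\phi(x;a_0)$ non-dividing over $A$, I must show that $\phi(x;a_0) \wedge \phi(x;a_1)$ is also non-dividing over $A$.

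Assume for contradiction that $\phi(x;a_0) \wedge \phi(x;a_1)$ divides over $A$ with some inconsistency bound $k$. By the standard Ramsey-plus-compactness extraction one obtains an $A$-indiscernible sequence of pairs $((b_n,c_n))_{n<\omega}$ of the same EM-type over $A$ as $(a_0,a_1)$ such that $\{\phi(x;b_n) \wedge \phi(x;c_n) : n<\omega\}$ is $k$-inconsistent. For each $n$, extend $(b_n,c_n) = (d^n_0,d^n_1)$ to an $A$-indiscernible sequence $I_n = (d^n_i)_{i<\omega}$ of the same EM-type over $A$ as $(a_i)_i$; a further Ramsey reduction arranges the rows $(I_n)_{n<\omega}$ to be mutually $A$-indiscernible. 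Two features of the resulting array are crucial: first, the non-dividing of $\phi(x;a_0)$ applied to each $A$-indiscernible row $I_n$ gives that $\{\phi(x;d^n_i) : i<\omega\}$ is consistent; second, the $k$-inconsistency of $\{\phi(x;d^n_0)\wedge \phi(x;d^n_1) : n<\omega\}$ is preserved by construction.

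From this array I would extract an inp-pattern of infinite depth contradicting NTP$_2$, by iterating the construction: feeding the dividing witness $((b_n,c_n))_n$ back as input to produce a further array whose columns give fresh $k$-inconsistent lines, while the previous rows remain $A$-indiscernible and mutually indiscernible with the new data. Each level contributes one $k$-inconsistent row to the pattern (from a column of the corresponding array), and consistency along vertical paths comes from combining row-wise realizations of $\phi$ secured by the non-dividing of $\phi(x;a_0)$ on the mutually indiscernible rows, using the extension base hypothesis to push through the necessary type-extensions. The main obstacle I foresee is coordinating the within-level and cross-level indiscernibilities so that horizontal $k$-inconsistency and vertical path-consistency hold simultaneously throughout the iteration; this is the technical heart of the argument in \cite[Theorem~2.9]{CherBY}, and it is precisely where both NTP$_2$ and the extension-base assumption are indispensable.
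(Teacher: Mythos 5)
First, note that the paper does not prove this statement at all: it is imported verbatim as a Fact from \cite{CherBY} (Theorem 2.9), so there is no in-paper proof to compare against. The only question is whether your sketch would itself constitute a proof, and it would not.

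Your setup is fine as far as it goes: $A$-invariance is indeed immediate, the reduction from forking to dividing over an extension base is legitimate (Chernikov--Kaplan), and the extraction of an array with mutually $A$-indiscernible rows, each row consistent for $\phi$ by non-dividing of $\phi(x;a_0)$ and with the first two columns giving a $k$-inconsistent family of conjunctions, is standard. But everything after that is the actual content of the theorem, and you have not supplied it. Two concrete problems. First, a single such array is not yet a contradiction to NTP$_2$; you need an inp-pattern of infinite depth, i.e.\ infinitely many $k_m$-inconsistent rows with \emph{every} vertical path consistent, and your description of how the iteration produces this (``feeding the dividing witness back as input'') does not specify which formulas and parameters form the rows of the pattern or why the rows stay pairwise mutually indiscernible across levels. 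Second, and more seriously, your proposed source of vertical path-consistency --- ``combining row-wise realizations of $\phi$ secured by the non-dividing of $\phi(x;a_0)$'' --- asks for the consistency (indeed non-dividing) of a conjunction of finitely many non-dividing instances chosen from different indiscernible sequences; that is essentially the S1 property you are trying to prove, so the argument as sketched is circular. The actual proof in \cite{CherBY} avoids this by a different decomposition: it first establishes the ``broom lemma'' (if a formula non-forking over $A$ is covered by $\psi(x)$ together with finitely many formulas, each dividing along an array mutually indiscernible over the remaining data, then $\psi(x)$ is already non-forking over $A$), proved by an induction that peels off one dividing disjunct at a time using NTP$_2$, and then deduces the S1 property from it by a short separate argument. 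Your sketch explicitly defers exactly this step to the cited paper, so it is an outline of where a proof would have to go rather than a proof.
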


\section{Amalgamation}

Throughout this section, we assume that $T$ is NTP$_2$. We will improve Fact \ref{f_amal}. We first show that we can always amalgamate a type with itself.

\begin{prop}
Let $A$ be an extension base. Let $\phi(x;a)$ be non-forking over $A$ and assume that $b\ind_A a$ with $b\equiv_A^ L a$. Then $\phi(x;a)\wedge \phi(x;b)$ is non-forking over $A$.
\end{prop}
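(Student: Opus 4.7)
The plan is to apply Fact~\ref{f_amal} to transfer a doubled realization of $\phi$ from an $A$-indiscernible pair to the given pair $(a,b)$. Concretely, I will look for an auxiliary parameter $a_1$ such that $(a,a_1)$ starts an $A$-indiscernible sequence, $a_1\equiv_A^L b$, and $a\ind_A a_1 b$. Given such an $a_1$, Fact~\ref{fact_s1} applied to $(a,a_1)$ together with the non-forking formula $\phi(x;a)$ produces $c\models\phi(x;a)\wedge\phi(x;a_1)$ with $c\ind_A aa_1$. Then Fact~\ref{f_amal}, applied with $a$ as the fixed parameter and the Lascar-equivalent pair $(a_1,b)$ as source and target, yields $c'\ind_A ab$ with $c'a\equiv_A ca$ (hence $c'\models\phi(x;a)$) and $c'b\equiv_A ca_1$ (hence $c'\models\phi(x;b)$), showing that $\phi(x;a)\wedge\phi(x;b)$ is non-forking over $A$.

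To construct $a_1$, first take $a_1^0$ to be the second element of a Morley sequence in $\tp(a/A)$ over $A$ starting with $a$, so that $(a,a_1^0)$ is $A$-indiscernible and $a_1^0\ind_A a$. Using symmetry of non-forking in NTP$_2$ over extension bases, the hypothesis $b\ind_A a$ gives $a\ind_A b$, so Lemma~\ref{lem_lascarext} applies with $a,b,a_1^0$ in the roles of $a,b,c$: it produces $a_1$ with $aa_1\equiv_A^L aa_1^0$ and $aa_1\ind_A b$. The equality of pair-types $(a,a_1)\equiv_A(a,a_1^0)$ transfers the indiscernibility and preserves $a_1\ind_A a$; being in an $A$-indiscernible sequence with $a$ then gives $a_1\equiv_A^L a\equiv_A^L b$. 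Finally, decomposing $aa_1\ind_A b$ yields $a\ind_{Aa_1}b$; combined with $a\ind_A a_1$ (symmetry from $a_1\ind_A a$), the reverse decomposition gives $a\ind_A a_1 b$.

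The main obstacle is precisely this construction of $a_1$: one must simultaneously arrange the indiscernibility of $(a,a_1)$, the Lascar equivalence $a_1\equiv_A^L b$, and the three-term independence $a\ind_A a_1 b$, starting only from $b\ind_A a$ and $a\equiv_A^L b$. Lemma~\ref{lem_lascarext} is the decisive tool for this; the rest is careful bookkeeping of pair-decompositions together with symmetry of non-forking in NTP$_2$ over extension bases.
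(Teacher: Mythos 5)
Your overall strategy (produce an auxiliary copy of one parameter, realize the doubled formula along an indiscernible pair via the S1 property, then transport one coordinate with Fact~\ref{f_amal}) is the same as the paper's, but the execution breaks down at the point where you invoke ``symmetry of non-forking in NTP$_2$ over extension bases''. Non-forking is \emph{not} symmetric in NTP$_2$ theories, even over models: symmetry of forking characterizes simple theories, and the whole point of the notion of \emph{generically simple} type in Section~3 of this paper is that the implication $b\ind_A a \Rightarrow a\ind_A b$ is a special property of certain types, not a general fact. You use this false symmetry twice: once to turn the hypothesis $b\ind_A a$ into $a\ind_A b$ so as to apply Lemma~\ref{lem_lascarext}, and once to turn $a_1\ind_A a$ into $a\ind_A a_1$. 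Neither step is available, and your argument genuinely needs them: your application of Fact~\ref{f_amal} keeps $a$ as the fixed parameter and swaps $a_1$ for $b$, which forces you to establish $a\ind_A a_1b$ and in particular $a\ind_A b$ --- exactly the reversed independence that is not among the hypotheses.

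The paper's proof sidesteps this by placing the auxiliary element on the other side. It takes $(b_i:i<\omega)$ Morley over $Aa$ in $\tp(b/Aa)$ with $b_0=b$; then $b_1\ind_{Aa}b$ together with $b_1\equiv_{Aa}b$ and the hypothesis $b\ind_A a$ give $b_1\ind_A ab$ by transitivity, with no symmetry needed. The S1 property (Fact~\ref{fact_s1}) yields that $\phi(x;b)\wedge\phi(x;b_1)$ is non-forking over $A$, and Fact~\ref{f_amal} is applied with $b_1$ as the \emph{kept} parameter and $(b,a)$ as the swapped Lascar-equivalent pair, for which the required independence is precisely $b_1\ind_A ba$. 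Finally $\tp(b_1/Aa)=\tp(b/Aa)$ converts the non-forking of $\phi(x;a)\wedge\phi(x;b_1)$ into that of $\phi(x;a)\wedge\phi(x;b)$. To repair your write-up, reorganize it along these lines so that the element required to be independent from the pair is the one you construct, rather than the given $a$.
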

\begin{proof}
Let $b\ind_A a$, with $b \equiv_A^ L a$. Build an indiscernible sequence $(b_i:i<\omega)$ in $\tp(b/Aa)$ which is Morley over $Aa$ and with $b_0 = b$. We then have $b_1 \ind_A ba$, $b \equiv_A^ L a$ and by the S1 property, $\phi(x;b)\wedge \phi(x;b_1)$ is non-forking over $A$. By Fact \ref{f_amal}, $\phi(x;a)\wedge \phi(x;b_1)$ is non-forking over $A$. As $\tp(b_1/Aa)=\tp(b/Aa)$, also $\phi(x;a)\wedge \phi(x;b)$ is non-forking over $A$.
\end{proof}

We deduce the following strengthening of Fact \ref{f_amal}.

\begin{thm}\label{th_amalg}
Let $A$ be an extension base and $\phi(x;y),\psi(x;y)$ over $A$. Let $a,b,b'\in \monster$, $b\equiv_A^ L b'$ and either $a\ind_A b'$ or $b'\ind_A a$. Assume that $\phi(x;a)\wedge \psi(x;b)$ is non-forking over $A$, then so is $\phi(x;a)\wedge \psi(x;b')$.
\end{thm}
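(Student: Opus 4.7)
The plan is to use Lemma \ref{lem_lascarext} to replace $b$ by a Lascar-equivalent intermediate $b^{\ast}$ well positioned with respect to $b'$, and then apply Fact \ref{f_amal} to transfer the non-forking from $b^{\ast}$ to $b'$. I would treat Case 1, where $a \ind_A b'$, in detail; Case 2 ($b' \ind_A a$) should proceed by the symmetric construction using the dual form of Lemma \ref{lem_lascarext}.

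First, applying Lemma \ref{lem_lascarext} to $a \ind_A b'$ with auxiliary tuple $b$ produces $b^{\ast}$ with $ab^{\ast} \equiv_A^L ab$ and $ab^{\ast} \ind_A b'$. The joint Lascar equivalence of pairs is realized by some $\sigma \in \Autf(\monster/A)$ with $\sigma(a)=a$ and $\sigma(b)=b^{\ast}$; transporting a non-forking realizer of $\phi(x;a)\wedge\psi(x;b)$ along $\sigma$ shows that $\phi(x;a)\wedge\psi(x;b^{\ast})$ is also non-forking over $A$, witnessed by $\sigma(c)\ind_A ab^{\ast}$. Projecting the Lascar equivalence to the second coordinate gives $b^{\ast}\equiv_A^L b$, and by transitivity $b^{\ast}\equiv_A^L b'$.

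I then feed the data into Fact \ref{f_amal} applied to the quadruple $(\sigma(c),a,b^{\ast},b')$, hoping to transfer from $(a,b^{\ast})$ to $(a,b')$. The first hypothesis ($\sigma(c)\ind_A ab^{\ast}$) and the Lascar equivalence $b^{\ast}\equiv_A^L b'$ are immediate. The substantive remaining hypothesis is $a\ind_A b^{\ast}b'$, which I expect to be the main obstacle. From $ab^{\ast}\ind_A b'$, monotonicity yields $a\ind_A b'$ and $b^{\ast}\ind_A b'$, and the chain rule on the left gives $b^{\ast}\ind_{Aa} b'$; these do not immediately combine into $a\ind_A b^{\ast}b'$. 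To bridge this gap I would invoke the Proposition on the formula $\psi$ with the now Lascar-equivalent independent pair $(b^{\ast},b')$, obtaining that $\psi(x;b^{\ast})\wedge\psi(x;b')$ is non-forking over $A$, and then combine this with $\phi(x;a)\wedge\psi(x;b^{\ast})$ by means of the side independence $ab^{\ast}\ind_A b'$ to get a single non-forking conjunction $\phi(x;a)\wedge\psi(x;b^{\ast})\wedge\psi(x;b')$, from which the conclusion follows.

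The principal technical difficulty is thus the final amalgamation step: fusing the two partial non-forking formulas, one with parameters in $Aab^{\ast}$ and the other with parameters in $Ab^{\ast}b'$, into a single non-forking formula over $Aab^{\ast}b'$ under the side independence $ab^{\ast}\ind_A b'$. This is precisely the kind of independent amalgamation Fact \ref{f_amal} is designed to deliver, but care is required in invoking it because the independence available is weaker than the $a\ind_A bb'$ required in the bare statement of that fact.
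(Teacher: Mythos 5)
Your opening moves are fine: producing $b^*$ with $ab^*\equiv_A^L ab$ and $ab^*\ind_A b'$ via Lemma \ref{lem_lascarext}, and transporting the non-forking of $\phi(x;a)\wedge\psi(x;b)$ to $\phi(x;a)\wedge\psi(x;b^*)$ along an automorphism in $\Autf(\monster/A)$, are both correct. But the gap you flag is genuine and your bridge does not close it. Fact \ref{f_amal} applied to $(\sigma(c),a,b^*,b')$ needs $a\ind_A b^*b'$, and this cannot be extracted from $ab^*\ind_A b'$: non-forking is not symmetric in NTP$_2$, and monotonicity/transitivity only yield $a\ind_A b'$ and $b^*\ind_{Aa}b'$, with the wrong sides throughout. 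Your fallback is no better: knowing that $\phi(x;a)\wedge\psi(x;b^*)$ and $\psi(x;b^*)\wedge\psi(x;b')$ are each non-forking and that $ab^*\ind_A b'$ does not let you conclude that $\phi(x;a)\wedge\psi(x;b^*)\wedge\psi(x;b')$ is non-forking. That fusion is a three-way independent amalgamation of independence-theorem type over $A$; it is not an instance of Fact \ref{f_amal} or of the Proposition, it is at least as strong as the theorem you are trying to prove, and nothing in the paper (or in general NTP$_2$ theory) delivers it. So the proof is incomplete precisely at its essential step.

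The missing idea is to apply Lemma \ref{lem_lascarext} the other way around: complete the \emph{target} $b'$ to a Lascar-conjugate of the whole pair, instead of manufacturing a third copy $b^*$ of $b$ and then trying to reattach $b'$. In the case $b'\ind_A a$, one first arranges (by extension over a model, staying in the same Lascar strong type over $Aa$) that $b'\ind_A ab$, and then uses the lemma with $b'$ as the fixed part to find $a'$ with $a'b'\equiv_A^L ab$ and $a'b'\ind_A ab$. The self-amalgamation Proposition, applied to the single formula $\chi(x;y_1y_2)=\phi(x;y_1)\wedge\psi(x;y_2)$ and the independent Lascar-conjugate pair $(ab,a'b')$, gives that $\chi(x;ab)\wedge\chi(x;a'b')$ is non-forking, and $\phi(x;a)\wedge\psi(x;b')$ is among its conjuncts. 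The case $a\ind_A b'$ then reduces to this one by a Lascar-strong automorphism over $A$ sending $b$ to $b'$, with the roles of $\phi$ and $\psi$ exchanged; Fact \ref{f_amal} is never invoked directly in the theorem's proof, only inside the Proposition.
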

\begin{proof}
Assume first that $b'\ind_A a$. Let $M\supseteq A$ be a model such that $b'\ind_A Ma$. Find $b''\equiv_{Ma} b'$ such that $b''\ind_A Mab$. Then we have $b'' \equiv_{Aa}^L b'$ and replacing $b'$ by $b''$, we may assume that $b'\ind_A ab$. By Lemma \ref{lem_lascarext}, we can then find $a'$ such that $a'b'\equiv_A^L ab$ and $a'b'\ind_A ab$. By the previous proposition, $(\phi(x;a)\wedge \psi(x;b))\wedge (\phi(x;a')\wedge \psi(x;b'))$ is non-forking over $A$ and a fortiori so is $\phi(x;a)\wedge \psi(x;b')$.

Now assume that $a\ind_A b'$. Let $\sigma$ be a Lascar-strong automorphism over $A$ sending $b$ to $b'$ and set $a'=\sigma(a)$. Then $a\equiv_A ^L a'$, $\phi(x;a')\wedge \psi(x;b')$ does not fork over $A$ and $a\ind_A b'$. We can then apply the previous paragraph to conclude that $\phi(x;a)\wedge \psi(x;b')$ is non-forking over $A$.
\end{proof}

Here is another way to state the result of the theorem.

\begin{cor}\label{cor_amal}
Let $A$ be an extension base. Let $a,b,b',c\in \monster$, $b\equiv_A^ L b'$ and either $a\ind_A b'$ or $b'\ind_A a$. If $c\ind_{A} ab$, then there is $c'$ such that $c'\ind_A ab'$, $c'a\equiv_A ca$ and $c'b'\equiv_A cb$.
\end{cor}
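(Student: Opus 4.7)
The plan is to translate the hypothesis $c\ind_A ab$ into a partial non-forking type $\Sigma$ over $Aab'$, which a suitable $c'$ will realize. Set $p(x):=\tp(c/Aa)$ and $q(x,y):=\tp(c/Ab)$, viewed as a type in the variable $x$ with the parameter $b$ displayed; since $b\equiv_A^L b'$ entails $b\equiv_A b'$, substitution of $b'$ for $b$ yields a well-defined complete type $q(x,b')$ over $Ab'$, and $c''\models q(x,b')$ is precisely the statement $c''b'\equiv_A cb$. The partial type to realize is then $\Sigma(x) := p(x) \cup q(x,b')$ over $Aab'$.

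To see that $\Sigma$ is non-forking over $A$, fix $\phi(x;a)\in p$ and $\psi(x;b)\in q$. The conjunction $\phi(x;a)\wedge\psi(x;b)$ belongs to $\tp(c/Aab)$, hence is non-forking over $A$ by hypothesis on $c$. The conditions on $a,b,b'$ are exactly those in the hypothesis of Theorem \ref{th_amalg}, so that theorem gives us that $\phi(x;a)\wedge\psi(x;b')$ is non-forking over $A$. Since $p$ and $q$ are closed under finite conjunctions, every finite sub-conjunction of $\Sigma(x)$ reduces to a conjunction of a single formula in $p$ with a single formula in $q(x,b')$, and is therefore non-forking over $A$.

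Passing from such a partial non-forking type to a complete non-forking extension is routine: since the formulas forking over $A$ form an ideal on $Aab'$-definable sets (closed under finite disjunctions and logical consequence), a standard compactness argument in the Stone space $S(Aab')$ shows that $\Sigma$ extends to a complete type $r\in S(Aab')$ that is non-forking over $A$. Any realization $c'\models r$ then satisfies $c'\ind_A ab'$ by construction, $c'a\equiv_A ca$ from the $p$-part of $\Sigma$, and $c'b'\equiv_A cb$ from the $q(x,b')$-part of $\Sigma$, which is exactly the conclusion of the corollary. The only step carrying any genuine content is the formula-by-formula appeal to Theorem \ref{th_amalg}; the rest is type-theoretic packaging and should present no real obstacle.
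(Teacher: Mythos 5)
Your proof is correct and follows exactly the paper's own argument: write $p(x)=\tp(c/Aa)$ and $q(x;b)=\tp(c/Ab)$, apply Theorem \ref{th_amalg} to see that $p(x)\cup q(x;b')$ is non-forking over $A$, and take $c'$ realizing a non-forking completion over $Aab'$. The paper's version is merely terser, leaving implicit the formula-by-formula reduction and the standard extension of a wide partial type to a wide complete type that you spell out.
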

\begin{proof}
Write $p(x;a)=\tp(c/a)$ and $q(x;b)=\tp(c/b)$. By Theorem \ref{th_amalg} the partial type $p(x;a)\wedge q(x;b')$ does not fork over $A$. Take $c'$ to realize a completion of that type over $Aab'$ which is still non-forking over $A$.
\end{proof}

The following consequence of Theorem \ref{th_amalg} is due to Ita\"i Ben Yaacov. It answers Question 3.8 from \cite{CherBY}.

\begin{thm}\label{th_Lascar}
Let $A$ be an extension base and $b\equiv_A ^ L b'$. Then $d_L(b,b')\leq 2$. Furthermore, if $b\ind_A b'$, then $b,b'$ start a Morley sequence over $A$.
\end{thm}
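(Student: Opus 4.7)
\emph{Plan.} I prove the ``furthermore'' clause first; the distance bound $d_L(b,b')\leq 2$ then drops out. Given $b\equiv_A^L b'$, apply Lemma~\ref{lem_lascarext} (with empty $a$, $c=b$, and parameter $bb'$) to obtain $b''$ with $b''\equiv_A^L b$ and $b''\ind_A bb'$; by base monotonicity $b''\ind_A b$ and $b''\ind_A b'$, and $b''\equiv_A^L b\equiv_A^L b'$, so the pairs $(b,b'')$ and $(b'',b')$ both satisfy the hypotheses of the ``furthermore'' clause. Each therefore starts an $A$-indiscernible sequence, giving $d_L(b,b'')\leq 1$ and $d_L(b'',b')\leq 1$, hence $d_L(b,b')\leq 2$.

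For the ``furthermore'' clause, given $b\ind_A b'$ and $b\equiv_A^L b'$, I build an infinite Morley sequence starting with $(b,b')$ by compactness, reducing to the existence for each $n$ of an $A$-indiscernible Morley sequence $(b_0,\ldots,b_n)$ with $b_0=b$, $b_1=b'$ and all $b_i$ in a common Lascar strong type. The induction step goes as follows: given such a length-$n$ sequence, use Lemma~\ref{lem_lascarext} to choose $b^*\equiv_A^L b_n$ with $b^*\ind_A b_0\cdots b_n$; by indiscernibility of $(b_0,\ldots,b_n)$ there is an $A$-automorphism $\sigma$ shifting it one step, and $\tilde b=\sigma(b_n)$ then satisfies $(b_1,\ldots,b_n,\tilde b)\equiv_A(b_0,\ldots,b_n)$. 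I invoke Corollary~\ref{cor_amal} to amalgamate the two overlapping indiscernible tuples $(b_0,b_1,\ldots,b_n)$ and $(b_1,\ldots,b_n,\tilde b)$ along their common middle $(b_1,\ldots,b_n)$, using $b^*$ as the Lascar-conjugate parameter along which the amalgamation is taken and the Morley hypothesis $b_n\ind_A b_0\cdots b_{n-1}$ as the required independence. The output is $b_{n+1}$ making $(b_0,\ldots,b_{n+1})$ $A$-indiscernible with $b_{n+1}\ind_A b_0\cdots b_n$; and $b_{n+1}$ lies in the common Lascar strong type because $(b_0,b_{n+1})\equiv_A(b_0,b_1)$ by indiscernibility and Lascar strong types are $A$-automorphism invariant.

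The main obstacle is this amalgamation step: one needs $b_{n+1}$ to witness indiscernibility of \emph{every} sub-tuple of $(b_0,\ldots,b_{n+1})$, not just those obtained by shifting. The indiscernibility of $(b_0,\ldots,b_n)$ supplies all sub-tuples avoiding $b_{n+1}$, the shifted copy $(b_1,\ldots,b_n,\tilde b)$ supplies those avoiding $b_0$, and Corollary~\ref{cor_amal} glues the two along their common middle -- this is exactly the amalgamation package provided by Theorem~\ref{th_amalg}. A secondary issue is matching the direction of non-forking with the Morley convention $b_i\ind_A b_{<i}$; the hypothesis $b\ind_A b'$ combined with extension and Lemma~\ref{lem_lascarext} lets one reorient as needed for the construction.
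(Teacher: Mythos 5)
Your reduction of the bound $d_L(b,b')\le 2$ to the ``furthermore'' clause is exactly the paper's, but your proof of the ``furthermore'' clause has a genuine gap at the step you yourself flag as the main obstacle. Corollary \ref{cor_amal} produces, from $c\ind_A ab$, an element $c'$ with $c'\ind_A ab'$ such that $\tp(c'/Aa)=\tp(c/Aa)$ and $\tp(c'/Ab')$ is the transport of $\tp(c/Ab)$; it gives no control whatsoever over the joint type $\tp(c'/Aab')$. Consequently, if you take $a=(b_1,\dots,b_n)$ as the ``common middle'' and try to glue the two overlapping indiscernible tuples, the resulting $b_{n+1}$ has the prescribed type over $Ab_1\cdots b_n$ and over $Ab_0$ separately, but the types of sub-tuples containing $b_0$, $b_{n+1}$ \emph{and} some of the middle elements --- already $(b_0,b_1,b_{n+1})$ when $n\ge 2$ --- are determined by neither restriction, so indiscernibility of $(b_0,\dots,b_{n+1})$ does not follow. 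These mixed sub-tuples are covered by neither of your two pieces, and no statement in the paper amalgamates two types over the union $Aab'$ jointly.

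The paper's proof circumvents exactly this difficulty by giving up on indiscernibility during the construction: it builds a sequence $(b_i:i<\kappa)$ for $\kappa$ large, maintaining only the Morley condition $b_i\ind_A b_{<i}$ and the \emph{pairwise} condition $b_ib_j\equiv_A bb'$ for $i<j$ --- which is all Theorem \ref{th_amalg} can deliver: at stage $\alpha+1$ one amalgamates $\tp(b_\alpha/b_{<\alpha})$ with the copy at $b_\alpha$ of $\tp(b'/b_0)$ --- and only at the end extracts an indiscernible subsequence by Erd\H{o}s--Rado. The extracted sequence is still Morley, all of its pairs still realize $\tp(bb'/A)$, and an automorphism moves its first two terms to $(b,b')$. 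To salvage your inductive scheme you would need amalgamation of the full type over $Aab'$, which is precisely what is unavailable; I recommend restructuring along the long-sequence-plus-extraction lines.
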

\begin{proof}
It is sufficient to prove the furthermore part, since we can then take $b''\ind_A bb'$ with $b''\equiv_A ^ L b \equiv_A^ L b'$ (by Lemma \ref{lem_lascarext} say) and the sequence $(b,b'',b')$ witnesses that $d_L(b,b')\leq 2$.

Fix some large enough cardinal $\kappa$ and we build by induction a sequence $(b_i:i<\kappa)$ such that for each $i<\kappa$, $b_i\ind_A b_{<i}$ and for $i<j<\kappa$, $b_ib_j\equiv_A bb'$. We will also ensure that the sequence $\tp(b_i/b_{<i})$ is increasing. Start by setting $b_0=b$ and $b_1=b'$. At some limit $\lambda$, let $b_\lambda$ realize $\bigcup_{i<\lambda} \tp(b_i/b_{<i})$. This satisfies the conditions. Assume we have constructed $b_i$ for $i\leq \alpha$ and we look for $b_{\alpha+1}$. Let $p(x;b_{<\alpha})=\tp(b_\alpha/b_{<\alpha})$ and $q(x;b_0)=\tp(b_\alpha/b_0)=\tp(b'/b_0)$. Then $p(x;b_{<\alpha})\cup q(x;b_0)\subseteq \tp(b_\alpha/b_{<\alpha})$ is non-forking over $A$. Since $b_0b_\alpha \equiv_A bb'$, we have $b_\alpha \equiv^ L_A b_0$. Also $b_\alpha \ind_A b_{<\alpha}$ so by Theorem \ref{th_amalg}, the type $p(x;b_{<\alpha})\cup q(x;b_\alpha)$ does not fork over $A$. Take $b_{\alpha+1}$ to realize it so that $b_{\alpha+1}\ind_A b_{\leq \alpha}$. This finishes the construction. Finally, using Erd\H os-Rado, we extract from the sequence $(b_i:i<\kappa)$ an indiscernible subsequence. This gives what we were looking for.
\end{proof}

\subsection{Some speculations and questions}

Let $\Autf(\monster/A)$ be the group of automorphisms if $\monster$ which fix every Lascar-strong type over $A$.

\begin{defi}
Let $A$ be an extension base and let $B,C\subseteq \monster$ contain $A$. Let $p\in S(B)$ and $q\in S(C)$ both non-forking over $A$. We say that $p$ and $q$ are compatible over $A$ if for some/every $\sigma \in \Autf(\monster/A)$ such that either $\sigma(B) \ind_A C$ or $C\ind_A \sigma(B)$, $\sigma(p)(x)\cup q(x)$ is non-forking over $A$.
\end{defi}

We find the following question very appealing.

\begin{question}
Assume that $p$ and $q$ are $M$-invariant types such that $p^{(\omega)}|_M = q^{(\omega)}|_M$. Does it follow that $p$ and $q$ are compatible?
\end{question}

Note that this is true in simple theories because any two $M$-invariant types having the same restriction to $M$ are compatible. It also holds in NIP theories because the condition $p^{(\omega)}|_M = q^{(\omega)}|_M$ implies $p=q$ (\cite[Proposition 2.36]{NIPbook}).

%\subsection{Ideals}

%Recall that an $A$-invariant ideal $\mu(x)$ is S1 if for every $A$-indiscernible sequence $(b_i:i<\omega)$ and formula $\phi(x;y)$, if $\phi(x;b_0)\wedge \phi(x;b_1)\in \mu$, then $\phi(x;b_0)\in \mu$.

\begin{defi}
Let $A$ be an extension base. An $\Autf(\monster/A)$-invariant S1 ideal $\mu(x)$ has amalgamation if any two $\mu$-wide types are compatible over $A$. 
\end{defi}

\begin{ex}
The dual ideal of a global type non-forking over $A$ has amalgamation over $A$. In an NIP theory those are the only ones since two different non-forking types are never compatible.

In a simple theory, if $p(x)\in \Lstp(A)$, the ideal of formulas $\phi(x)$ such that $p(x)\wedge \phi(x)$ forks over $A$ has amalgamation.
\end{ex}

Let $A$ be an extension base and for simplicity assume Lascar strong types and types over $ A$ coincide. We speculate that $A$-invariant S1 ideals with amalgamation could play in NTP$_2$ theories the same role that $A$-invariant types play in NIP. Of particular importance should be the minimal $A$-invariant S1 ideals with amalgamation. In a simple theory, there is only one such ideal: the ideal of all forking formulas. In NIP, those are precisely duals of $A$-invariant types since two different invariant types can never be amalgamated. In both cases, we see that those minimal ideals partition $A$-invariant types (in two opposite trivial ways). We ask whether this holds in all NTP$_2$ theories.

\begin{question}
Let $A$ be as above. Is the compatibility relation on $A$-invariant types an equivalence relation? Is it the case that if $\mu$ and $\nu$ are two distinct minimal $A$-invariant S1 ideals with amalgamation, then no $A$-invariant type can be wide for both $\mu$ and $\nu$?
\end{question}

\section{Groups}

Recall that a group is definably amenable if it admits a translation-invariant measure, that is a measure $\mu(x)$ on $G$ over some model $M$ with $\mu(g\cdot X)=\mu(X)$ for any $g\in G(M)$ and $M$-definable set $X$. There is a rich theory of NIP definably amenable groups. As shown in \cite{tamedyn}, a group which is not definably amenable cannot admit any notion of \emph{generic type} (where this could mean for example strongly f-generic: no translate forks over some model, or having a small orbit under translation). In NTP$_2$ theories, definable amenability is slightly too strong and the right condition that generalizes jointly simple groups and definably amenable NIP groups is the existence of strongly f-generic types. This was studied in \cite{PRC} and we recall the main results here.

Let $G$ be a group definable in an NTP$_2$ structure $M$.

\begin{defi}\label{strong f generics}
A global type $p\in S_G(\monster)$ is strongly (left) f-generic
over $A$ if for all $g\in G(\monster)$, $g\cdot p$ does not fork
over $A$.

It is strongly bi-f-generic if for all $g,h\in G(\monster)$, $g\cdot p \cdot h$ does not fork over $A$.
\end{defi}

If $G$ admits a strongly f-generic type over some extension base $A$, then it admits a bi-f-generic type over any extension base. When this is the case, we say that $G$ has strong f-generics. Any group definable in a simple theory is such, as is any definably amenable NTP$_2$ group. In NIP, this condition is equivalent to definable amenability.

\begin{defi}
Let $\phi(x)\in L(A)$ be a formula. We say that \emph{$\phi(x)$ is f-generic over $A$} if no (left) translate of $\phi(x)$ forks over $A$. We say that \emph{$\phi(x)$ $G$-divides over $A$} if for some $A$-indiscernible sequence $(g_i:i<\omega)$ of elements of $G$, the partial type $\{g_i\cdot \phi(x):i<\omega\}$ is inconsistent.
\end{defi}

\begin{lemme}[\cite{PRC}, Lemma 3.7]\label{lem_fund1}
Let $A$ be an extension base and $\phi(x)\in L(A)$. Then $\phi(x)$ is f-generic over $A$ if and only if it does not $G$-divide over $A$.
\end{lemme}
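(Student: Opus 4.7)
The plan is to prove both implications by a direct translation between the two notions, using the fundamental fact from \cite{CherKapl} that forking equals dividing over extension bases in NTP$_2$ theories.

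For the easy direction ($G$-dividing implies not f-generic), I would suppose $(g_i : i<\omega)$ is an $A$-indiscernible sequence in $G$ with $\{g_i\cdot\phi(x):i<\omega\}$ inconsistent. Viewing $g_0\cdot\phi(x)$ as the instance $\psi(x;g_0)$ of the formula $\psi(x;y):=\phi(y^{-1}\cdot x)$, the sequence $(g_i)$ then witnesses that $\psi(x;g_0)$ divides over $A$. Hence $g_0\cdot\phi(x)$ forks over $A$, showing that the translate by $g_0\in G$ forks and therefore $\phi$ is not f-generic over $A$.

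For the converse, I would take the contrapositive: assume some translate $g\cdot\phi(x)$ forks over $A$ for some $g\in G(\monster)$. Because $A$ is an extension base and $T$ is NTP$_2$, forking and dividing coincide over $A$, so $\psi(x;g):=\phi(g^{-1}\cdot x)$ divides over $A$. Unwinding the definition of dividing, there is an $A$-indiscernible sequence $(c_i:i<\omega)$ with $c_0=g$ such that $\{c_i\cdot\phi(x):i<\omega\}$ is inconsistent. Since $g\in G$ and $G$ is $A$-definable, indiscernibility forces $c_i\in G$ for every $i$, so $(c_i)$ is an $A$-indiscernible sequence in $G$ witnessing that $\phi(x)$ $G$-divides over $A$.

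There is no real obstacle here: the argument is essentially a translation between ``translation by a group element'' and ``dividing along an indiscernible sequence of group elements''. The only nontrivial ingredient is the cited equivalence of forking and dividing over extension bases in NTP$_2$ theories, which is precisely what permits us to upgrade a forking translate to an actual $G$-dividing witness.
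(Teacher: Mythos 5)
The paper states this result as a citation to \cite{PRC} and gives no proof of its own, but your argument is correct and is the standard one: dividing implies forking gives the easy direction, and the equality of forking and dividing over extension bases in NTP$_2$ theories gives the converse by upgrading a forking translate $g\cdot\phi(x)$ to a dividing witness. The only implicit hypothesis is that $G$ is defined over $A$ (the standing convention here), which is what guarantees that the $A$-indiscernible sequence $(c_i)$ witnessing dividing stays inside $G$ and hence witnesses $G$-dividing.
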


Fix some model $M$ and let $\mu_M$ be the ideal of formulas which do not extend to a global type strongly f-generic over $M$. A definable set is \emph{wide} if it does not lie in $\mu_M$ ({\it i.e.}, if it extends to a global type strongly f-generic over $M$). A type is wide if all formulas in it are wide. A type over $M$ is wide precisely if it is f-generic, that is all formulas in it are f-generic.

For any wide type $p$, let $St_{\mu_M}(p) = \{g\in G : gp \cup p$ is wide$\}$. We have the following stabilizer theorem.

\begin{thm}[\cite{PRC}, Theorem 3.18]\label{th_stab}
Assume that $G$ has strong f-generics. Let $p\in S_G(M)$ be f-generic and define $\mu_M$ as above.

Then $G^{00}_M=G^{\infty}_M=St_{\mu_M}(p)^2=(pp^{-1})^2$ and $G^{00}_M\setminus St_{\mu_M}(p)$ is contained in a union of non-$\mu_M$-wide $M$-definable sets.
\end{thm}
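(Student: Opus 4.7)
The plan is to adapt Hrushovski's stabilizer theorem strategy to the NTP$_2$ setting, combining the amalgamation result of Theorem \ref{th_amalg} with the S1 property of non-forking (Fact \ref{fact_s1}). Observe first that $\mu_M$ is $M$-invariant, $G$-invariant under left translation (by definition of strong f-genericity), and an S1 ideal inherited from the non-forking ideal; Theorem \ref{th_amalg} then yields the key amalgamation property for $\mu_M$-wide partial types. Set $S = St_{\mu_M}(p)$ throughout.

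The easy preliminary steps are: (i) $S = S^{-1}$, by translation-invariance of wideness ($g p \cup p$ is wide iff $p \cup g^{-1} p$ is wide); (ii) $S \cdot S \subseteq (p p^{-1})^2$, because for $g \in S$ any realization $a$ of a wide completion of $g p \cup p$ satisfies $a \models p$ and $g^{-1} a \models p$, so $g = a (g^{-1} a)^{-1} \in p p^{-1}$; and (iii) $(p p^{-1})^2 \subseteq S \cdot S$, obtained by picking $a, b \models p$ with $a \ind_M b$ and applying Corollary \ref{cor_amal} to witness that $a b^{-1} \in S$.

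The heart of the argument is showing that $S \cdot S$ is an $M$-type-definable subgroup of bounded index, hence equals $G^{00}_M$. Closure under multiplication, $S \cdot S \cdot S \subseteq S \cdot S$, is the classical three-generic argument: given $g_1, g_2, g_3 \in S$, one realizes the corresponding stabilizer witnesses independently over $M$, then uses the amalgamation property together with S1 to fuse the resulting wide partial types and extract a decomposition of $g_1 g_2 g_3$ as a product of two stabilizer elements. Bounded index follows by a standard compactness argument: only boundedly many left cosets of $S \cdot S$ can arise, else one produces a dividing pattern violating S1. The inclusion $S \cdot S \subseteq G^{00}_M$ uses that $a b^{-1} \in G^{00}_M$ whenever $a, b$ realize the same f-generic type and both witness wideness of a common extension. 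Finally, $G^{00}_M = G^\infty_M$ because $S \cdot S$ is defined by the $\Aut(\monster/M)$-invariant condition of wideness and therefore is automatically Lascar-invariant of bounded index.

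For the last clause, suppose $g \in G^{00}_M \setminus S$. Then $g p \cup p$ is not wide, so by compactness there exist formulas $\phi, \psi \in p$ over $M$ such that $\phi(x) \wedge \psi(g^{-1} x)$ is not $\mu_M$-wide; taking the union over all such $g$ covers $G^{00}_M \setminus S$ by $M$-definable non-wide sets. The main obstacle I expect is the three-generic closure step: this is precisely where the full strength of Theorem \ref{th_amalg} and the S1 property must be combined, and it is the technical bottleneck that separates the NTP$_2$ argument from its simple-theory and NIP counterparts, where closure follows from symmetry of forking independence or from uniqueness of global invariant extensions, respectively.
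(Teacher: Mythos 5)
This statement is not proved in the paper at all: it is imported verbatim as Theorem 3.18 of \cite{PRC}, so there is no in-paper proof to compare against. Judged on its own merits, your outline correctly identifies the Hrushovski-stabilizer template (symmetry of $S=St_{\mu_M}(p)$, $S\subseteq pp^{-1}$, closure of $S^2$ under multiplication, bounded index), but it has several genuine gaps. First and most seriously, the closure step $S\cdot S\cdot S\subseteq S\cdot S$ is only named (``the classical three-generic argument''), not carried out; this step \emph{is} the theorem, and you yourself flag it as the bottleneck. Second, the appeal to Theorem \ref{th_amalg} and Corollary \ref{cor_amal} is misplaced: those results concern the non-forking ideal over an extension base, whereas the stabilizer is computed with respect to the coarser ideal $\mu_M$ of non-f-generic formulas ($\mu_M$-wide implies non-forking, not conversely), so amalgamation of non-forking partial types does not give amalgamation of $\mu_M$-wide ones. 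In \cite{PRC}, as in Hrushovski's original, one first proves separately that $\mu_M$ is S1 (via Lemma \ref{lem_fund1} and Fact \ref{fact_s1}) and then runs the stabilizer machinery on S1 and invariance alone; no external amalgamation theorem is invoked, and indeed the present paper's Theorem \ref{th_amalg} postdates \cite{PRC}.

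Two further steps are incorrect as written. For the last clause, exhibiting a non-wide set of the form $\phi(x)\wedge\psi(g^{-1}x)$ does not place $g$ itself in a non-wide $M$-definable set; the assertion to prove is the contrapositive statement that any $g\in G^{00}_M$ with $\tp(g/M)$ $\mu_M$-wide already lies in $St_{\mu_M}(p)$, and this again requires the stabilizer argument, not just compactness. For $G^{00}_M=G^\infty_M$, your justification runs backwards: an $M$-invariant bounded-index subgroup merely \emph{contains} $G^\infty_M$, so invariance of the wideness condition gives nothing. The correct route is to note that any two realizations of $p$ are Lascar-equivalent over the model $M$ (over a model, types are Lascar strong types; alternatively use Theorem \ref{th_Lascar}), hence $pp^{-1}\subseteq G^\infty_M$ and so $(pp^{-1})^2\subseteq G^\infty_M\subseteq G^{00}_M$, which combined with $G^{00}_M\subseteq (pp^{-1})^2$ closes the loop.
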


%Having understood the generalization of definable amenability from NIP to NTP$_2$, we now seek to generalize some important subclasses. We concentrate on two, which are in some sense opposite extremes. First, we generalize the notion of a group with a generically stable generic type (which play an important role in Hrushovski's analysis \cite{metastableGr} of groups in ACVF), then that of a compactly dominated group (examples of which are definably compact groups in o-minimal theories \cite{NIP2}). We did not find a convincing generalization of the wider class of fsg groups (defined in \cite{NIP1}) which contains those two, and leave this problem open for now.

\subsection{Generically simple types}

In \cite{CherNTP}, Chernikov defines \emph{simple types} in NTP$_2$ theories (see Definition 6.1 there). We define here a weaker notion of generically simple types. We prove their basic properties following essentially the arguments in \cite{CherNTP}.

\begin{defi}
Let $A$ be any set and $p\in S(A)$. We say that $p$ is generically simple if for every $b\in \monster$ and $a\models p$, $b\ind_{A} a \Longrightarrow a \ind_A b$.
\end{defi}

If $A\subseteq B$, $p\in S(B)$ does not fork over $A$ and $p|_A$ is generically simple, we say that $p$ is generically simple over $A$.

\begin{lemme}\label{lem_gensimpledcl}
Assume that $\tp(a/A)$ is generically simple and $b\in \dcl(Aa)$, then $\tp(b/A)$ is generically simple.
\end{lemme}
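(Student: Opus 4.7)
The plan is to transport the hypothesis from $b$ up to $a$ via a realization of $\tp(a/Ab)$, apply generic simplicity of $\tp(a/A)$ there, and then restrict the conclusion back down to $b$ along an $A$-definable function witnessing $b \in \dcl(Aa)$. Fix an $A$-definable $f$ with $b = f(a)$, and let $c \in \monster$ with $c \ind_A b$; the goal is $b \ind_A c$.

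First, using right extension for non-forking at the extension base $A$ (which is the setting implicit in this discussion, since generic simplicity over non-extension-bases carries no real content in an NTP$_2$ context), I would produce $a' \equiv_{Ab} a$ with $c \ind_A b a'$. Since the formula $f(x) = b$ belongs to $\tp(a/Ab)$, we have $f(a') = b$, so $b \in \dcl(Aa')$ and monotonicity of non-forking gives $c \ind_A a'$. Next, because $a' \models \tp(a/A)$ and the latter is generically simple, the defining implication applied to the pair $(a',c)$ converts $c \ind_A a'$ into $a' \ind_A c$. Finally, preservation of non-forking under passage to the definable closure, applied to $b \in \dcl(Aa')$, yields $b \ind_A c$.

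The argument is short and I do not foresee a serious obstacle. The two items worth double-checking are the tacit use that $A$ is an extension base, which is what makes right extension applicable to $c \ind_A b$, and the observation that the realization $a'$ produced by extension automatically satisfies $f(a') = b$: this is what lets $c \ind_A ba'$ simplify to $c \ind_A a'$, and in turn what lets us push $a' \ind_A c$ back down to $b \ind_A c$ at the end.
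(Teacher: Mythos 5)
Your proof is correct and is essentially the paper's own argument: the paper takes a non-forking extension of $\tp(c/Ab)$ to $Aa$ so as to assume $c\ind_A a$, whereas you instead move $a$ to $a'\equiv_{Ab}a$ with $c\ind_A ba'$ --- the two configurations are automorphic images of each other over $Ab$, and the rest (generic simplicity, then pushing down through $\dcl$) is identical. The only minor remark is that the extension step you flag needs no extension base hypothesis, since a non-forking type always admits a non-forking completion over a larger parameter set.
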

\begin{proof}
Let $c\ind_A b$. By taking a non-forking extension of $\tp(c/Ab)$ to $Aa$, we may assume that $c\ind_A a$. Then $a\ind_A c$ and in particular $b\ind_A c$.
\end{proof}

\begin{lemme}\label{lem_gensimpleprod}
If $p,q\in S(A)$ are generically simple, $a\models p$, $a'\models q$ with $a\ind_A a'$, then $\tp(a,a'/A)$ is generically simple.
\end{lemme}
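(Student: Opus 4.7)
The plan is to show $aa' \ind_A b$ assuming $b \ind_A aa'$, by reducing via transitivity of non-forking to the two conditions $a \ind_A b$ and $a' \ind_{Aa} b$.

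The first condition is immediate: from $b \ind_A aa'$ and right monotonicity, $b \ind_A a$, and generic simplicity of $p$ then yields $a \ind_A b$.

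The main content is the second condition. By base monotonicity applied to $b \ind_A aa'$, we already have $b \ind_{Aa} a'$, so the task is precisely to flip this non-forking relation over the enlarged base $Aa$. Observe first that generic simplicity of $q$ applied to $b \ind_A a'$ (right monotonicity) gives $a' \ind_A b$, and applied to $a \ind_A a'$ gives $a' \ind_A a$; so $\tp(a'/Aa)$ is a non-forking extension of $q$. My intention is now to invoke Lemma~\ref{lem_lascarext} with the hypothesis $a \ind_A b$ and the auxiliary tuple $a'$: this produces $c'$ with $(a,c') \equiv_A^L (a,a')$ and $ac' \ind_A b$. Base monotonicity on $ac' \ind_A b$ gives $c' \ind_{Aa} b$, and the Lascar strong equivalence yields $c' \equiv_{Aa} a'$.

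The remaining step, which is where the bulk of the work lies, is to transfer the non-forking $c' \ind_{Aa} b$ to $a' \ind_{Aa} b$. For this, I would use the amalgamation provided by Corollary~\ref{cor_amal} (equivalently Theorem~\ref{th_amalg}), exploiting the Lascar strong equivalence $(a,c') \equiv_A^L (a,a')$ together with the independence $a \ind_A a'$: this allows swapping $c'$ for $a'$ while maintaining the non-forking with $b$ over $Aa$. Once $a' \ind_{Aa} b$ is established, transitivity combined with $a \ind_A b$ yields $aa' \ind_A b$, as required.

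The main obstacle is this final amalgamation step. The delicate point is that the amalgamation corollary typically produces a witness $A$-conjugate to the target rather than the target itself; making the swap apply to the specific tuple $a'$ (rather than an arbitrary realization of its $A$-type) requires care with the $Aa$-equivalence supplied by the Lascar strong equivalence of Lemma~\ref{lem_lascarext}, and is where the hypothesis that both $p$ and $q$ are generically simple (not just $p$) is really used.
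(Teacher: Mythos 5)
Your overall decomposition --- reduce $aa'\ind_A b$ to $a\ind_A b$ plus $a'\ind_{Aa} b$ and conclude by transitivity --- is exactly the paper's, and the first half is fine. But the route you propose to $a'\ind_{Aa}b$ contains a genuine gap. The final ``swap $c'$ for $a'$'' is not something Theorem~\ref{th_amalg} or Corollary~\ref{cor_amal} can deliver: those statements replace the \emph{parameter} by a Lascar-conjugate and produce a \emph{new} realization of the relevant type (a conjugate of the original one), whereas you need a non-forking statement about the \emph{specific} tuple $a'$ with $b$ held fixed. Non-forking of $\tp(c'/Aab)$ over $Aa$ is simply not preserved when $c'$ is moved to $a'$ by a Lascar-strong automorphism that does not fix $b$ (the type of the pair over $b$ changes), and nothing in the amalgamation machinery addresses this; you flag exactly this point yourself and leave it open, only noting that generic simplicity of $q$ ``must be used'' without saying how. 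A second, independent problem is that Lemma~\ref{lem_lascarext} and Corollary~\ref{cor_amal} require $A$ to be an extension base, which is not a hypothesis of this lemma and is not needed for it.

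The missing idea is much more elementary: apply generic simplicity of $q$ to the \emph{compound} tuple $ba$ rather than to $b$ alone (the definition quantifies over arbitrary tuples of $\monster$). From $b\ind_A aa'$, base monotonicity gives $b\ind_{Aa} a'$; together with $a\ind_A a'$, transitivity gives $ba\ind_A a'$; generic simplicity of $\tp(a'/A)$ then yields $a'\ind_A ba$, and base monotonicity gives $a'\ind_{Aa}b$. Combined with $a\ind_A b$ (which you already have) and transitivity, this gives $aa'\ind_A b$. This uses only base monotonicity and transitivity, valid over an arbitrary base set, and no amalgamation at all.
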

\begin{proof}
Let $b \ind_A aa'$. We have $a \ind_A a'$, hence by transitivity, $ba \ind_A a'$. As $\tp(a'/A)$ is generically simple, $a' \ind_A ba$. On the other hand, we have $b \ind_A a$, hence $a \ind_A b$ by generic simplicity of $\tp(a/A)$. Therefore by transitivity again, $aa'\ind_A b$ as required.
\end{proof}

\begin{lemme}\label{lem_descent}
Assume that $\tp(a/A)$ is generically simple and $\tp(b/Aa)$ is generically simple. Then $\tp(ab/A)$ is generically simple.
\end{lemme}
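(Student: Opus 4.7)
The plan is to unwind the hypothesis $c \ind_A ab$ into two pieces living at different bases, apply generic simplicity on each piece separately, and then recombine via the transitivity of non-forking recalled in the preliminaries.

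More precisely, fix $c \in \monster$ with $c \ind_A ab$ and aim to show $ab \ind_A c$. First I would use monotonicity on the right of the non-forking relation together with base monotonicity to extract from $c \ind_A ab$ the two conclusions
\[
c \ind_A a \qquad \text{and} \qquad c \ind_{Aa} b.
\]
Next, since $\tp(a/A)$ is generically simple, the first of these flips to $a \ind_A c$. Similarly, since $\tp(b/Aa)$ is generically simple (with base $Aa$), the second flips to $b \ind_{Aa} c$.

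At this point the two conclusions $a \ind_A c$ and $b \ind_{Aa} c$ are precisely in the shape of the transitivity statement recalled in the preliminaries (``if $a \ind_{Cd} b$ and $d \ind_C b$ then $ad \ind_C b$'', applied with $C = A$, $d = a$, and $b$ playing the role of the left-hand tuple, $c$ of the right-hand tuple). Invoking it yields $ba \ind_A c$, that is $ab \ind_A c$, as desired.

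There is really no serious obstacle here; the argument is a bookkeeping exercise in the basic monotonicity and transitivity properties of forking, combined with two applications of the definition of generic simplicity. The only point that requires a small amount of care is that the second application uses generic simplicity over the enlarged base $Aa$, which is why the definition was set up to allow base changes, and that the transitivity step needs the two independence statements to be over the correct (nested) bases; both conditions are satisfied automatically by the way we split the hypothesis in the first paragraph.
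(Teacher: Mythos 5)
Your proof is correct and is essentially the same as the paper's: both split $c \ind_A ab$ into $c \ind_A a$ and $c \ind_{Aa} b$, flip each using generic simplicity over the respective base, and recombine by transitivity to get $ab \ind_A c$. No issues.
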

\begin{proof}
Let $c \ind_A ab$. Then $c\ind_{Aa} b$ and hence $b\ind_{Aa} c$. On the other hand, as $\tp(a/A)$ is generically simple, $a \ind_A c$. By transitivity, $ab\ind_A c$ as required.
\end{proof}

\begin{lemme}\label{lem_symgensimple}
Let $(a_i:i<n)$ be tuples, possibly of different sizes. Assume that for each $i$, $\tp(a_i/A)$ is generically simple and $a_i\ind_A a_{<i}$. Then for any two disjoint subsets $I,J\subseteq n$, we have $a_I \ind_A a_J$ (where $a_I=(a_i:i\in I)$).
\end{lemme}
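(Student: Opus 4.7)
The plan is to induct on $n$, but first I would establish the auxiliary fact that $\tp(a_K/A)$ is generically simple for every $K\subseteq n$. For this I prove by induction on $k$ that $\tp(a_{<k}/A)$ is generically simple: at the inductive step, the hypothesis $a_k\ind_A a_{<k}$ combined with generic simplicity of $\tp(a_k/A)$ flips to give $a_{<k}\ind_A a_k$, so Lemma \ref{lem_gensimpleprod} applied to the two generically simple types $\tp(a_{<k}/A)$ and $\tp(a_k/A)$ yields generic simplicity of $\tp(a_{<k+1}/A)$. Lemma \ref{lem_gensimpledcl} then propagates this to arbitrary sub-tuples $a_K$, since $a_K\in\dcl(Aa_{<n})$.

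With that in hand, I prove the main statement by induction on $n$. If $n-1\notin I\cup J$, the induction hypothesis applies directly. If $n-1\in J$, generic simplicity of $\tp(a_I/A)$ (from the auxiliary step) reduces the task to showing $a_J\ind_A a_I$, so without loss of generality $n-1\in I$. Write $I=I'\cup\{n-1\}$ with $I',J\subseteq n-1$ disjoint. From $a_{n-1}\ind_A a_{<n-1}$ and monotonicity we get $a_{n-1}\ind_A a_{I'}a_J$, and base monotonicity then gives $a_{n-1}\ind_{Aa_{I'}} a_J$. The induction hypothesis at $n-1$ yields $a_{I'}\ind_A a_J$. Applying the transitivity rule recalled in the preliminaries (if $a\ind_{Cd} b$ and $d\ind_C b$, then $ad\ind_C b$) to these two independences, we conclude $a_{n-1}a_{I'}\ind_A a_J$, that is, $a_I\ind_A a_J$.

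The principal subtlety I anticipate lies in the auxiliary step: the Morley-like hypothesis $a_k\ind_A a_{<k}$ runs in the opposite direction from what Lemma \ref{lem_gensimpleprod} demands, and generic simplicity of $\tp(a_k/A)$ is precisely the ingredient that permits the flip. Once that has been done, the remainder is pure forking calculus using monotonicity, base monotonicity, and the paper's left-hand transitivity, together with one further appeal to generic simplicity (now of the stronger type $\tp(a_I/A)$) to symmetrize the conclusion in the case $n-1\in J$.
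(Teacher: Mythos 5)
Your argument is correct, but it takes a genuinely different route from the paper's. The paper first upgrades the hypothesis to a symmetric statement: it shows that for each $k$ one has $a_{\neq k}\ind_A a_k$ and $a_k\ind_A a_{\neq k}$, so the Morley-like hypothesis is invariant under permuting the indices, and the general $a_I\ind_A a_J$ then falls out of the interval case $a_{\leq k}\ind_A a_{>k}$ after reordering. You instead run a direct induction on $n$ with a case split on where $n-1$ sits, combining base monotonicity, left transitivity, and a single symmetrization via generic simplicity of $\tp(a_I/A)$. Your route is more economical and avoids having to argue that permutation invariance really does follow from the single ``move $a_k$ to the end'' step; the paper's route yields the stronger and independently useful intermediate fact that the hypothesis is permutation-invariant. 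One correction to the step you single out as the principal subtlety: the flip from $a_k\ind_A a_{<k}$ to $a_{<k}\ind_A a_k$ is \emph{not} given by generic simplicity of $\tp(a_k/A)$ (which yields the implication $b\ind_A a_k\Rightarrow a_k\ind_A b$, i.e.\ the opposite direction); it is given by generic simplicity of $\tp(a_{<k}/A)$, which is exactly your inductive hypothesis in the auxiliary step. In fact no flip is needed at all: Lemma \ref{lem_gensimpleprod} applied with $a=a_k$, $a'=a_{<k}$ uses precisely the independence $a_k\ind_A a_{<k}$ that you already have, and generic simplicity of the concatenated tuple is insensitive to the order of its coordinates by Lemma \ref{lem_gensimpledcl}. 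This misattribution does not affect the validity of the rest of the proof.
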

\begin{proof}
First, by Lemma \ref{lem_gensimpleprod} and induction on $|I|$, $\tp(a_I/A)$ is generically simple for all $I\subseteq n$. Fix $k\leq n$. Then we have $a_{\geq k}\ind_A a_{<k}$ and $a_{<k}\ind_A a_{\geq k}$ by generic simplicity. It follows that $a_{<k}\ind_{Aa_{>k}} a_k$. As also $a_{>k}\ind_A a_k$, we have by transitivity $a_{\neq k}\ind_A a_k$ and $a_k \ind_A a_{\neq k}$ by generic simplicity. This shows that the hypothesis of the lemma is stable under permutation of the indices of the $a_i$'s. The result then follows from the fact that $a_{\leq k}\ind_A a_{>k}$ for all $k<n$.
\end{proof}

Note that the lemma goes through with infinitely many tuples.

%\begin{lemme}
%Assume that $\tp(a/A)$, $\tp(b/A)$ and $\tp(c/A)$ are generically simple. If $a\ind_A b$ and $ab\ind_A c$, then $b\ind_A ac$.
%\end{lemme}
%\begin{proof}
%We have $a\ind_A b$ and $c\ind_A ab$, therefore by transitivity, $ac\ind_A b$. Then use the fact that $\tp(b/A)$ is generically stable.
%\end{proof}

\begin{prop}
Let $A$ be an extension base, and assume that $p\in S(A)$ is generically simple. Let $a \ind_A b$ where $a\models p$. Then $b\ind_A a$.
\end{prop}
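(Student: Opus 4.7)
The plan is to show, for an arbitrary formula $\phi(y;a)\in\tp(b/Aa)$, that $\phi(y;a)$ does not fork over $A$; since $\phi$ is arbitrary, this yields $b\ind_A a$. The strategy is to produce a strict Morley sequence $(a'_k:k<\omega)$ in $p$ over $A$ whose terms all satisfy $a'_k\equiv_{Ab} a$; then $b$ itself witnesses consistency of $\{\phi(y;a'_k):k<\omega\}$, and Kim's lemma for NTP$_2$ delivers non-dividing of $\phi(y;a'_0)$.

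First I would extend $\tp(a/Ab)$ (non-forking over $A$ by hypothesis, since $A$ is an extension base and $a\ind_A b$) to a global type $\tilde p$ not forking over $A$. Iteratively realizing $\tilde p$ gives a long sequence $a_0=a$, $a_{i+1}\models \tilde p|_{Ab a_{\leq i}}$ for $i<\kappa$, where each $a_i$ realizes $\tp(a/Ab)$ and satisfies $a_i\ind_A Ab a_{<i}$. By Erd\H{o}s--Rado I extract an $A$-indiscernible subsequence $(a'_k:k<\omega)$; since subsequences of Morley sequences are Morley, $(a'_k)$ is a Morley sequence in $p$ over $A$, and the property $a'_k\equiv_{Ab} a$ is preserved by the extraction.

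Next, Lemma \ref{lem_symgensimple}, applied to this Morley sequence of realizations of the generically simple type $p$, gives full mutual independence: for disjoint $I,J$, $a'_I\ind_A a'_J$. Thus $(a'_k)$ is a strict Morley sequence in $p$ over $A$. This is the essential use of generic simplicity. Because $a'_k\equiv_{Ab} a$, every $Ab$-automorphism sending $a$ to $a'_k$ fixes $b$, and so $b\models\phi(y;a'_k)$ for every $k$; hence $\{\phi(y;a'_k):k<\omega\}$ is consistent, realized by $b$.

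Finally, Kim's lemma for NTP$_2$ theories over extension bases (\cite{CherKapl}) applied to the strict Morley sequence $(a'_k)$ in $\tp(a'_0/A)=p$ yields that $\phi(y;a'_0)$ does not divide over $A$. Since in NTP$_2$ over an extension base forking coincides with dividing, $\phi(y;a'_0)$ does not fork over $A$. As $a'_0\equiv_A a$, $A$-invariance of the non-forking ideal gives that $\phi(y;a)$ does not fork either, finishing the argument.

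The main obstacle is invoking Kim's lemma for NTP$_2$, which requires the Morley sequence to be strict rather than merely Morley; generic simplicity supplies exactly this strictness via Lemma \ref{lem_symgensimple}.
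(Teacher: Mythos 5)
The overall shape of your argument --- reduce to non-dividing of each $\phi(y;a)\in\tp(b/Aa)$, manufacture an indiscernible sequence of $Ab$-conjugates of $a$ so that $b$ itself realizes $\{\phi(y;a'_k):k<\omega\}$, then appeal to a Kim's-lemma statement --- is the right one, and it is close in spirit to the paper's proof, which adapts the array argument of \cite{CherNTP}, Section 6.2. The construction of the sequence, the preservation of $a'_k\equiv_{Ab}a$ under extraction, and the appeal to Lemma \ref{lem_symgensimple} for mutual independence are all correct.

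The gap is the invocation of Kim's lemma. In NTP$_2$ that lemma (Chernikov--Kaplan) is proved for \emph{strict} Morley sequences in a technical sense: sequences generated by a strictly invariant global type, i.e.\ a global non-forking extension $q$ of $\tp(a'_k/Aa'_{<k})$ with the additional property that $B\ind_A a'$ for \emph{every} set $B$ over which $q$ gets realized. This is needed because the proof of Kim's lemma builds an array whose $k$-th row is a dividing witness $\bar a_k$ for $\phi(y;a'_k)$, chosen indiscernible over the other rows; to construct row $k$ one needs $\bar a_{<k}\,a'_{>k}\ind_A a'_k$, that is, independence from $a'_k$ of the previously constructed \emph{witnesses}, not merely of the other terms of the original sequence. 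Mutual independence of $(a'_k)$, which is all that Lemma \ref{lem_symgensimple} delivers, is formally weaker and does not place you in the hypotheses of Kim's lemma; nor can you take $\tilde p$ strictly invariant over $A$, since finding such an extension of $\tp(a/Ab)$ would presuppose a strict version of $a\ind_A b$ that is not part of the hypothesis. Filling this hole is exactly the content of the array construction in \cite{CherNTP}, Lemma 6.13, which the paper's proof reuses: there generic simplicity is applied afresh at every stage to the tuples consisting of the rows $\bar b_{\leq i}$ together with the tail $a_{>i}$ (via Lemmas \ref{lem_gensimpleprod} and \ref{lem_symgensimple}) in order to recover the independence $a_{>i+1}\bar b_{\leq i}\ind a_{i+1}$ over the witnesses. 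So your plan identifies the right mechanism but hides the main difficulty inside an appeal to a theorem whose hypotheses your sequence is not shown to satisfy.
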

\begin{proof}
The proof of \cite[Section 6.2]{CherNTP} of the analogue result for simple types goes through using the lemmas above. More precisely, Lemma 6.1 there follows from Fact \ref{fact_s1} (with not assumption of simplicity). In Lemma 6.13, simplicity is only used for checking that property (3) holds. Note that each sequence $\bar b_i$ realizes a generically simple type over the base $A$ by Lemma \ref{lem_gensimpleprod}. We then have by construction $\bar b_i \ind a_{>i}\bar b_{<i}$. So the sequence $(\bar b_0,\ldots,\bar b_i,a_{i+1},a_{i+2},\ldots)$ satisfies the hypothesis of Lemma \ref{lem_symgensimple} and we conclude $a_{>i+1}\bar b_{\leq i} \ind a_{i+1}$ as required. Lemma 6.14 only uses generic simplicity, then Proposition 6.15 goes through unchanged.
\end{proof}

\begin{cor}\label{cor_gensimple}
If $q\in S(B)$ is generically simple over $A$, $A$ an extension base, then it is generically simple itself.

In fact, if $a\models q$ and $b \ind_B a$, then $a \ind_A Bb$.
\end{cor}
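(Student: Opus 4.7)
The plan is to reduce the corollary to a few applications of symmetry (from the previous proposition) combined with the standard transitivity/base monotonicity of non-forking. Note that once the ``in fact'' statement is established, generic simplicity of $q$ follows immediately by base monotonicity (from $a\ind_A Bb$ we get $a\ind_B b$), so I focus on proving $a\ind_A Bb$ under the assumptions $a\models q$ and $b\ind_B a$.

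First I would record the ingredient that makes everything work: since $p:=q|_A$ is generically simple over the extension base $A$, the previous proposition upgrades generic simplicity to full symmetry of non-forking when $a\models p$, namely $a\ind_A c \Leftrightarrow c\ind_A a$ for every tuple $c$. Since $q$ does not fork over $A$, any realization $a$ of $q$ satisfies $a\ind_A B$, so by this symmetry we get $B\ind_A a$.

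Next, I would combine this with the hypothesis $b\ind_B a$ using left transitivity (in the form stated in the preliminaries: if $b\ind_{AB} a$ and $B\ind_A a$ then $Bb\ind_A a$). This yields $Bb\ind_A a$. Finally, applying the definition of generic simplicity of $p$ to the tuple $Bb$ gives $a\ind_A Bb$, as desired, and base monotonicity then produces $a\ind_B b$, verifying generic simplicity of $q$.

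I do not expect any serious obstacle here: the whole proof is a two-step symmetry-transitivity-symmetry sandwich, using symmetry (via the previous proposition) to move $a$ to the right, stitching on $b$ via left transitivity, and then using generic simplicity of $p$ once more (in its raw definitional direction) to put $a$ back on the left. The only subtlety to be careful about is not conflating the two directions of ``symmetry'': the step $a\ind_A B\Rightarrow B\ind_A a$ relies on the previous proposition, whereas the step $Bb\ind_A a\Rightarrow a\ind_A Bb$ is the definition of generic simplicity of $p$ applied to the (possibly large) tuple $Bb$.
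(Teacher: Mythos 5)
Your proof is correct and is essentially identical to the paper's: both obtain $B\ind_A a$ from $a\ind_A B$ via the symmetry established in the preceding proposition, stitch on $b\ind_B a$ by left transitivity to get $Bb\ind_A a$, and then apply the raw definition of generic simplicity of $q|_A$ to conclude $a\ind_A Bb$, with base monotonicity giving $a\ind_B b$. Your remark distinguishing which direction uses the proposition versus the definition is exactly the point the paper's terser wording glosses over.
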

\begin{proof}
Let $b$ with $b \ind_B a$. We have $B\ind_A a$ as $\tp(a/A)$ is generically simple and $a\ind_A B$ by assumption. By transitivity, $Bb \ind_A a$, hence $a\ind_A Bb$. In particular, $a\ind_B b$, which shows that $q$ is generically simple.
\end{proof}

\begin{lemme}\label{lem_trans}
Assume that $\tp(a/A)$ is generically simple. Let $A\subseteq B\subseteq C$ with $a\ind_A B$ and $a\ind_B C$. Then $a\ind_A C$.
\end{lemme}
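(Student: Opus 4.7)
The strategy is to apply the \emph{In fact} clause of Corollary \ref{cor_gensimple} to $\tp(a/B)$, after symmetrizing the hypothesis $a\ind_B C$ into $C\ind_B a$.

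First, since $\tp(a/A)$ is generically simple and $a\ind_A B$, the type $\tp(a/B)$ is generically simple over $A$ in the sense of the paragraph following the definition of generic simplicity. Corollary \ref{cor_gensimple} (which applies because $A$ is an extension base, as standing throughout this section) then tells us two things: that $\tp(a/B)$ is itself generically simple, and that for any tuple $b$ with $b\ind_B a$ one has $a\ind_A Bb$.

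The natural choice is $b:=C$. If we can show $C\ind_B a$, the \emph{In fact} clause gives $a\ind_A BC$, which is $a\ind_A C$ since $B\subseteq C$, as desired. To pass from $a\ind_B C$ to $C\ind_B a$, I would apply the Proposition preceding Corollary \ref{cor_gensimple} to the generically simple type $\tp(a/B)$: symmetry of non-forking for generically simple types over an extension base.

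The main obstacle is that this application of the Proposition requires $B$ to be an extension base, while the lemma formally only assumes this of $A$. In the standing context of the section it is natural to take $B$ as an implicit extension base (e.g.\ a model); otherwise one would enlarge $B$ to a model $M\supseteq B$, transfer the hypotheses via appropriate non-forking extensions (using that $A$ is an extension base), prove the lemma at $M$, and descend. Modulo this administrative point, the proof collapses to one clean invocation of Corollary \ref{cor_gensimple}'s \emph{In fact} clause combined with symmetry.
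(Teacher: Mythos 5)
Your proof is essentially the paper's: it too deduces via Corollary \ref{cor_gensimple} that $\tp(a/B)$ is generically simple, then concludes $C\ind_B a$ by symmetry and applies the \emph{In fact} clause of Corollary \ref{cor_gensimple} to get $a\ind_A BC$, i.e.\ $a\ind_A C$. The extension-base caveat you raise for applying symmetry over $B$ is present but unaddressed in the paper's own two-line proof as well.
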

\begin{proof}
As $a\ind_A B$, Corollary \ref{cor_gensimple} implies that $\tp(a/B)$ is generically simple. Therefore we have $C\ind_B a$. Then again by Corollary \ref{cor_gensimple}, $a\ind_A C$.
\end{proof}

The following is the analogue of Problem 6.6 in \cite{CherNTP}.

\begin{question}
Assume that $q\in S(B)$ is generically simple and does not fork over $A$, then is $q|_A$ generically simple?
\end{question}

Note that by Lemma \ref{lem_descent}, this is true if $\tp(B/A)$ is generically simple.

\subsection{Generically simple generics}

We now define a notion of generically simple generic type. We will then prove that if a definable group admits such a type, then all its f-generic types are such and do not fork over any extension base, similarly to what happens with groups in simple theories.

In what follows, $G$ is again a group definable in an NTP$_2$ structure; $S_G(A)$ denotes the space of types over $A$ that concentrate on $G$.

In what follows, we adopt the convention that if say $a,b\in G$, then $ab$ always denotes the product $a\cdot b$ (as opposed to concatenation of tuples, which will be denoted by $a\hat{~}b$).
\begin{defi}
A type $p\in S_G(A)$ is generically simple generic (gsg) if $p$ is generically simple and for any $B\supseteq A$ and $b\in G$, we have $a\ind_A Bb \Longrightarrow b \cdot a\ind_A B b$.

The type $p$ will be said two-sided gsg if it is generically simple and for any $B\supseteq A$ and $b,c\in G$, we have $a\ind_A Bb\hat{~}c \Longrightarrow b \cdot a\cdot c\ind_A Bb\hat{~}c$.
\end{defi}

%This is equivalent to: all non co-forking extensions of $p$ are f-generic over $A$.

Note that if $p$ is generically simple generic, then any non-forking extension of $p$ is again generically simple generic. (If say $a\models p$, $a\ind_A B$ and $a\ind_B b$, then $a\ind_A Bb$ by Lemma \ref{lem_trans}.)

\begin{lemme}\label{lem_trangsg}
Let $A$ be an extension base and $\tp(a/A)$ be gsg, $a\ind_A b$, then $\tp(ba/A)$ is generically simple.
\end{lemme}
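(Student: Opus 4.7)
The plan is to unwind the definition of generic simplicity: fix $c$ with $c\ind_A ba$ and aim at $ba\ind_A c$. The natural tool is the gsg hypothesis on $\tp(a/A)$ applied with base $B=Ac$ and group element $b$: if I can secure the premise $a\ind_A Acb$, the gsg clause delivers $ba\ind_A Acb$, and right monotonicity concludes $ba\ind_A c$. So the whole task reduces to producing the independence $a\ind_A b\hat{~}c$ over $A$.

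First I would extend $\tp(c/Aba)$ to a complete type over the larger parameter set $A\cup\{a,b\}$ which remains non-forking over $A$; this is legitimate because $A$ is an extension base. Concretely, pick $c'\equiv_{Aba} c$ with $c'\ind_A a\hat{~}b$. Since any $A$-automorphism fixing $ba$ and sending $c$ to $c'$ preserves forking, it is enough to prove $ba\ind_A c'$, so I may replace $c$ by $c'$ and assume outright $c\ind_A a\hat{~}b$.

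With that separated independence in hand, base monotonicity gives $c\ind_{Ab} a$. The hypothesis $a\ind_A b$ together with generic simplicity of $\tp(a/A)$ triggers Corollary \ref{cor_gensimple}, so $\tp(a/Ab)$ is also generically simple; applying this flips $c\ind_{Ab} a$ into $a\ind_{Ab} c$. Combining $a\ind_A b$ with $a\ind_{Ab} c$ through the transitivity Lemma \ref{lem_trans} (whose hypothesis is precisely generic simplicity of $\tp(a/A)$) produces $a\ind_A b\hat{~}c$, the exact input the gsg condition asks for, and the argument closes. The main delicate point is the preliminary extension: the raw hypothesis $c\ind_A ba$ treats $ba$ as a single element, and one cannot directly extract $c\ind_{Ab} a$ from it; only after upgrading $c$ to a non-forking extension that separates $a$ and $b$ can generic simplicity be invoked twice to assemble the independence on which gsg acts.
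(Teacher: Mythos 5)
Your proof is correct and follows essentially the same route as the paper's: reduce to producing $a\ind_A b\hat{~}c$ after first upgrading $c$ to a non-forking extension over $Aa\hat{~}b$, then invoke the gsg clause. The only cosmetic difference is that you obtain $a\ind_A b\hat{~}c$ by flipping $c\ind_{Ab}a$ and then applying Lemma \ref{lem_trans}, where the paper cites the ``in fact'' clause of Corollary \ref{cor_gensimple} directly; these are the same argument.
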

\begin{proof}
Let $d\ind_A b a$. We can assume furthermore that $d\ind_A ba\hat{~}b\hat{~}a$. Hence $d\ind_{Ab} a$. As $a\ind_A b$, by Corollary \ref{cor_gensimple}, $a\ind_{A} b\hat{~}d$. As $\tp(a/A)$ is gsg, we conclude $ba\ind_A d$ as required.
\end{proof}

It will follow from the statements proved below that in fact $\tp(ba/A)$ is gsg.

\begin{lemme}\label{lem_gsgtranslate}
Let $p\in S_G(A)$ be two-sided gsg and $b,c\in A$. Then $\tp(bac/A)$ is two-sided gsg.
\end{lemme}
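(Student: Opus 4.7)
The plan is to observe that since $b,c\in A$, the elements $a$ and $bac$ are interdefinable over $A$: indeed $bac\in\dcl(Aa)$, and conversely $a = b^{-1}\cdot(bac)\cdot c^{-1}\in\dcl(A,bac)$ because $b^{-1},c^{-1}\in\dcl(A)$ (the group inversion is $\emptyset$-definable on $G$). Consequently any non-forking statement for $bac$ over a base containing $A$ transfers to the same statement for $a$, using the elementary fact that if $\theta(x,\bar e)$ divides along an $A$-indiscernible sequence then so does $\theta(g(z),\bar e)$ for any $A$-definable function $g$. The whole lemma then reduces to a single application of the two-sided gsg property of $p$, after replacing the translating group elements by suitable products.

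Generic simplicity of $\tp(bac/A)$ is immediate from Lemma \ref{lem_gensimpledcl} applied to $p=\tp(a/A)$ and $bac\in\dcl(Aa)$. For the gsg property itself, I fix $B\supseteq A$ and $b_1,c_1\in G$, and assume $bac\ind_A Bb_1\hat{~}c_1$. By the interdefinability above, $a\ind_A Bb_1\hat{~}c_1$. Since $b,c\in A\subseteq B$, the sets $Bb_1\hat{~}c_1$ and $B(b_1b)\hat{~}(cc_1)$ have the same $A$-definable closure, so also $a\ind_A B(b_1b)\hat{~}(cc_1)$. The two-sided gsg property of $p$, applied with the group elements $b_1b$ and $cc_1$, then gives $(b_1b)\cdot a\cdot(cc_1)\ind_A B(b_1b)\hat{~}(cc_1)$, which rewrites as exactly the required $b_1\cdot(bac)\cdot c_1\ind_A Bb_1\hat{~}c_1$.

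I do not expect a genuine obstacle: conceptually, two-sided translation by elements of $A\cap G$ is an $A$-definable bijection on $G$, hence automatically preserves every property of $\tp(a/A)$ that is formulated purely in terms of non-forking over $A$-containing bases. The only point worth spelling out carefully is the transfer of non-forking between $a$ and $bac$, which is the small bookkeeping step isolated above.
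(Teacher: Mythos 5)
Your proof is correct and follows essentially the same route as the paper: transfer the non-forking hypothesis from $bac$ to $a$ via interdefinability over $A$, then apply the two-sided gsg property of $p$ to the composite group elements ($b_1b$ and $cc_1$ in your notation, $db$ and $cd'$ in the paper's). Your explicit check of generic simplicity via Lemma \ref{lem_gensimpledcl} is a small point the paper leaves implicit, but the argument is the same.
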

\begin{proof}
Assume that $bac \ind_A Bd\hat{~}d'$. Then $a\ind_A Bb\hat{~}c\hat{~}d\hat{~}d'$ since $b,c\in A$. Therefore $dbacd' \ind_A Bb\hat{~}c\hat{~}d\hat{~}d'$ as $\tp(a/A)$ is two-sided gsg.
\end{proof}

\begin{lemme}\label{lem_bigsgexist}
Let $A$ be an extension base, $p\in S_G(A)$ be gsg and take $a,b\models p$, $a\ind_A b$. Then $\tp(ab^{-1}/A)$ is two-sided gsg.
\end{lemme}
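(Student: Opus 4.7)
Generic simplicity of $\tp(ab^{-1}/A)$ is immediate from the lemmas already proved: since $p$ is generically simple and $a\ind_A b$, Lemma \ref{lem_gensimpleprod} yields that $\tp(a\hat{~}b/A)$ is generically simple, and then $ab^{-1}\in\dcl(A,a\hat{~}b)$ together with Lemma \ref{lem_gensimpledcl} finishes this part.

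The substance is the two-sided translation property. Let $e=ab^{-1}$, take $B\supseteq A$ and $u,v\in G$ with $e\ind_A Bu\hat{~}v$; I must show $u\cdot e\cdot v\ind_A Bu\hat{~}v$. My key move is a reduction: apply left-extension to $e\ind_A Bu\hat{~}v$ with the tuple $a\hat{~}b$, producing $(a',b')\equiv_{Ae}(a,b)$ with $a'\hat{~}b'\ind_A Bu\hat{~}v$. Equivalence over $Ae$ preserves $e=a'(b')^{-1}$ as well as $a',b'\models p$ and $a'\ind_A b'$, so after relabeling I may assume $a\hat{~}b\ind_A Bu\hat{~}v$.

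Next, I promote $a\hat{~}b\ind_A Bu\hat{~}v$ to $a\ind_A Bu\hat{~}v\hat{~}b$. Base monotonicity gives $a\ind_{Ab} Bu\hat{~}v$; Corollary \ref{cor_gensimple} makes $\tp(a/Ab)$ generically simple, so by symmetry $Bu\hat{~}v\ind_{Ab} a$; combined with $b\ind_A a$ (symmetry of $\tp(a/A)$) and the transitivity axiom from the preliminaries, I obtain $Bu\hat{~}v\hat{~}b\ind_A a$, and one last symmetry delivers $a\ind_A Bu\hat{~}v\hat{~}b$. The gsg property of $p$, applied with parameter set $B\cup\{v,b\}\supseteq A$ and translating element $u$, then yields $ua\ind_A Bu\hat{~}v\hat{~}b$. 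Finally, $u\cdot e\cdot v=uab^{-1}v$ is $\dcl$-definable over $A$ from $ua,b,v$, all of which lie in $A\cup\{ua\}\cup Bu\hat{~}v\hat{~}b$; invariance of non-forking under taking $\dcl$ on the left yields $uev\ind_A Bu\hat{~}v\hat{~}b$, and monotonicity concludes $uev\ind_A Bu\hat{~}v$.

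The only delicate point I anticipate is the reduction step: the replacement pair must simultaneously represent the same $e$, realize $p\otimes p$ with $a\ind_A b$, and be independent from $Bu\hat{~}v$ over $A$, which is precisely why the correct equivalence to use is $\equiv_{Ae}$ rather than $\equiv_A$. Beyond that, the argument is a choreographed use of symmetry (granted by generic simplicity wherever needed) and a single application of the one-sided gsg assumption on $p$: the left factor $u$ is supplied directly by gsg on $a$, while the right factor $v$ and the inversion on $b$ are absorbed by the final definable-closure step.
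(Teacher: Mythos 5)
Your overall architecture matches the paper's: reduce via left-extension to $a\hat{~}b\ind_A Bu\hat{~}v$, upgrade this to $a\ind_A Bu\hat{~}v\hat{~}b$, apply the gsg hypothesis once, and finish by a definable-closure argument. The first and last steps are fine; indeed your ending is slightly leaner than the paper's, which also translates $b$ by $v$ and then combines the two independences by transitivity, whereas you absorb $b^{-1}$ and $v$ into the base in one $\dcl$ step. The genuine gap is in the middle ``promotion'' step, and it is twofold. First, the passage from $a\hat{~}b\ind_A Bu\hat{~}v$ to $a\ind_{Ab} Bu\hat{~}v$ is not base monotonicity: the property recorded in the preliminaries is $a\ind_C bd\Rightarrow a\ind_{Cd}b$, where the element absorbed into the base comes from the \emph{right-hand} side of $\ind$. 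Here $b$ sits on the left, and the left-hand analogue ($ab\ind_C d\Rightarrow a\ind_{Cb}d$, a converse to left transitivity) is not among the granted properties of forking and is not known in general. Second, even granting that step, you next need $a\ind_{Ab}Bu\hat{~}v\Rightarrow Bu\hat{~}v\ind_{Ab}a$; that direction of symmetry is the Proposition preceding Corollary \ref{cor_gensimple}, which requires the base to be an extension base, and $Ab$ is not known to be one. Corollary \ref{cor_gensimple} does give you generic simplicity of $\tp(a/Ab)$, but that is the implication $c\ind_{Ab}a\Rightarrow a\ind_{Ab}c$ --- the direction you do not need here.

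The repair is to perform the same two moves in the opposite order, staying over the extension base $A$ where both directions of symmetry are available for generically simple types: from $a\hat{~}b\ind_A Bu\hat{~}v$, generic simplicity of $\tp(a\hat{~}b/A)$ (Lemma \ref{lem_gensimpleprod}) together with the symmetry Proposition over the extension base $A$ gives $Bu\hat{~}v\ind_A a\hat{~}b$; now $b$ is on the right-hand side, so genuine base monotonicity gives $Bu\hat{~}v\ind_{Ab}a$; transitivity with $b\ind_A a$ yields $Bu\hat{~}v\hat{~}b\ind_A a$; and the definitional direction of generic simplicity of $p$ gives $a\ind_A Bu\hat{~}v\hat{~}b$. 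This is exactly the paper's route; with it in place, your gsg application and your concluding $\dcl$ step go through as written.
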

\begin{proof}
By Lemma \ref{lem_gensimpleprod}, $\tp(a,b/A)$ is generically simple, therefore so is $\tp(ab^{-1}/A)$. Now let $B\supseteq A$ and $c,d\in G$ such that $ab^{-1} \ind_A Bc\hat{~}d$. By left extension, we may assume that $a\hat{~}b \ind_A Bc\hat{~}d$. Then $Bc\hat{~}d\ind_A a\hat{~}b$ by generic simplicity. Since also $b\ind_A a$, by transitivity, $Bb\hat{~}c\hat{~}d \ind_A a$ and therefore $c a \ind_A Bb\hat{~}c\hat{~}d$ as $\tp(a/A)$ is gsg. Similarly, $d b \ind_A Ba\hat{~}c\hat{~}d$. By transitivity, $ca,db\ind_A Bc\hat{~}d$ and in particular $cab^{-1}d \ind_A Bc\hat{~}d$ as required.
\end{proof}

\begin{prop}\label{prop_gsgkiller}
Let $A\subseteq M$ an extension base. Assume that $p\in S_G(A)$ and $q\in S_G(M)$ are two-sided gsg. Then $q$ does not fork over $A$ and $q|_A$ is (two-sided) gsg.
\end{prop}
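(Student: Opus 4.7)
The plan is to show $b\ind_A M$ for $b\models q$, by choosing a well-positioned realization $a\models p$ and applying the two-sided gsg property of $p$ twice, with Lemma~\ref{lem_trans} bridging the two applications. First, using left extension over the extension base $A$, pick $a\models p$ with $a\ind_A M\hat{~}b$. Then $a\ind_M b$, and by symmetry of generically simple types over the extension base $M$ (applied to $\tp(a/M)$, a non-forking, hence gsg, extension of $p$), also $b\ind_M a$.

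Apply the two-sided gsg of $p$ over $A$ with $B=M$, $d=1$, $d'=b^{-1}$: since $\dcl(M\hat{~}b^{-1})=\dcl(Mb)$, the hypothesis $a\ind_A M\hat{~}b^{-1}$ is just $a\ind_A Mb$, so $c:=ab^{-1}\ind_A M\hat{~}b$; in particular $c\ind_A M$ and $c\ind_M b$. Applying Lemmas~\ref{lem_gensimpleprod} and~\ref{lem_gensimpledcl} over $M$ to $\tp(a/M)$ and $q$ (both gs, with $a\ind_M b$) shows $\tp(c/M)$ is generically simple, and then symmetry over $M$ yields $b\ind_M c$. Now apply the two-sided gsg of $q$ with $B=M$, $d=c$, $d'=1$: from $b\ind_M c$ we get $cb\ind_M c$; since $cb=(ab^{-1})b=a$, this reads $a\ind_M c$. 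Combining $a\ind_A M$ and $a\ind_M c$ via Lemma~\ref{lem_trans} (applicable since $\tp(a/A)=p$ is gs) gives $a\ind_A Mc$.

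For the finishing step, apply the two-sided gsg of $p$ once more, now with $B=Mc$, $d=c^{-1}$, $d'=1$: since $\dcl(Mc\hat{~}c^{-1})=\dcl(Mc)$, the hypothesis is again just $a\ind_A Mc$, so $c^{-1}a\ind_A Mc$. As $c^{-1}a=b$, we obtain $b\ind_A Mc$, hence $b\ind_A M$, proving $q$ does not fork over $A$. For the two-sided gsg of $q|_A$: given $e\ind_A b$, extend to $e^*\ind_A Mb$ with $e^*\equiv_{Ab} e$ and re-run the argument with $Me^*$ in place of $M$ throughout (the analogous gs relations and independences hold with $e^*$ included, so $\tp(c/Me^*)$ is gs and the chain leading to $b\ind_A Mce^*$ goes through), yielding $b\ind_A e^*$ and hence $b\ind_A e$; the two-sided translation property of $q|_A$ descends by an analogous chain, incorporating $B$, $d$, $d'$ into the parameter set from the start. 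The main obstacle is transferring independence relations established over $M$ back down to over $A$; this is cleanly resolved by the double application of the two-sided gsg of $p$, together with Lemma~\ref{lem_trans} providing the transitivity step $a\ind_A M$, $a\ind_M c\Rightarrow a\ind_A Mc$ in the middle.
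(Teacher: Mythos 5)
Your argument for the non-forking half of the statement is correct and takes a genuinely different route from the paper's. The paper reaches $b\ind_A M$ via the ideal $\mu_M$, a saturated model $N$, a translate $b'=db$ placed in $G^{00}_M$, and the stabilizer theorem (Theorem \ref{th_stab}) to produce a realization of $p'\cup b'^{-1}p'$; you avoid all of that. Your chain --- choose $a\models p$ with $a\ind_A Mb$, get $c=ab^{-1}\ind_A Mb$ from the two-sided property of $p$, flip to $b\ind_M c$ (this follows directly from generic simplicity of $q$ applied to $c\ind_M b$, so the detour through Lemmas \ref{lem_gensimpleprod}, \ref{lem_gensimpledcl} and the symmetry proposition is unnecessary, though harmless since $M$ is an extension base), recover $a=cb\ind_M c$ from the two-sided property of $q$, glue via Lemma \ref{lem_trans} to $a\ind_A Mc$, and finish with $b=c^{-1}a\ind_A Mc$ --- checks out line by line, and every base over which symmetry or extension is invoked ($A$ and $M$) is an extension base. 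This is a substantially more elementary proof of the forking statement, close in spirit to the classical translation argument for generics in simple theories; what it does not produce is the auxiliary data (the types $\tp(b'a/\cdot)$, etc.) that the paper's proof reuses to establish the second half.

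That second half is where your proposal has a genuine gap. For generic simplicity of $q|_A$ the ``re-run over $Me^*$'' can be made to work, but you must note that $\tp(b/Me^*)$ is two-sided gsg over $Me^*$ (true: $b\ind_M e^*$ by generic simplicity of $q$, and non-forking extensions of two-sided gsg types are two-sided gsg by the argument of the remark following the definition), and you should run it over a model $N\supseteq Me^*$ with $b\ind_M N$ rather than over $Me^*$ itself, since Lemma \ref{lem_trans} and the symmetry proposition are justified only over extension bases. More seriously, ``the two-sided translation property of $q|_A$ descends by an analogous chain, incorporating $B$, $d$, $d'$ into the parameter set from the start'' does not work as stated: enlarging the parameter set in your chain only yields independences of $b$ itself over $A$, whereas the desired conclusion $dbd'\ind_A Bd\hat{~}d'$ concerns the translate $dbd'$. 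After using the two-sided property of $\tp(b/N)$ to get $dbd'\ind_N \ldots$, descending that independence from $N$ to $A$ requires applying the already-proved non-forking-plus-generic-simplicity statement to the translated type $\tp(dbd'/Nd\hat{~}d')$ (two-sided gsg by Lemma \ref{lem_gsgtranslate}) --- exactly the extra step in the final paragraph of the paper's proof. This is fixable, but it is a missing idea, not a routine re-run.
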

\begin{proof}
%Assume first that $q\in G^{00}_M$. Let $b\models q$. By Theorem \ref{th_stab}, the type $pb^{-1}\cap p$ is $\mu_M$-wide. Let $a$ realize it such that $\tp(a/Mb)$ is $\mu_M$-wide. In particular $a\ind_M b$. As $\tp(b/M)$ is gsg, we deduce that $ab\ind_M a$. By construction, $\tp(ab/M)=p$ and thus is gsg over $A$. On the other hand, as $\tp(a/M)$ is gsg over $A$, we have $ab\ind_A M$. We conclude that $ab\ind_A Ma$. Now $\tp(ab/A)=p|_A$ is gsg on the left, so $b\ind_A Ma$ and in particular, $b\ind_A M$. Therefore $q$ does not fork over $A$.

Let $M\prec N$, $N$ sufficiently saturated. Define the ideal $\mu_M$ as in the introduction to this section. Let $d\in N$ lie in the same $G^{00}_M$-coset as $q$. By Lemma \ref{lem_trangsg}, we can assume that $\tp(d/A)$ is generically simple (take $d$ to be an appropriate translate of a realization of $p$). Let $b\models q$, with $\tp(b/N)$ $\mu_M$-wide and set $b'=db$. Then $\tp(b'/N)$ is $\mu_M$-wide and lies in $G^{00}_M$.

Let $p'$ be a non-forking extension of $p$ to $M$. Then $p'$ is generically simple over $A$ and gsg. By Theorem \ref{th_stab}, the type $b'p'\cup p'$ is $\mu_M$-wide, hence so is $p'\cup b'^{-1}p'$ by left-invariance of $\mu_M$. Let $a$ realize the latter type such that $\tp(a/Nb)$ is $\mu_M$-wide. In particular $a\ind_M Nb$ and $Nb\ind_M a$ as $\tp(a/M)$ is generically simple, and by Corollary \ref{cor_gensimple}, $$(0)\qquad a\ind_A Nb.$$
As $\tp(a/M)$ is gsg over $A$, we deduce
\[(1)\qquad b'a\ind_A Nb.\]
On the other hand, as $\tp(b'/N)$ is two-sided gsg by Lemma \ref{lem_gsgtranslate}, we have 
$$(2)\qquad b'a\ind_N a.$$

By construction, $\tp(b'a/M)=p'$ and thus $\tp(b'a/A)$ is generically simple. By (1), (2) and Lemma \ref{lem_trans}, $b'a\ind_A Na$. Now $\tp(b'a/A)=p$ is two-sided gsg, so we have $b'\ind_A Na$ and $b\ind_A Na$. In particular, $b\ind_A M$. Therefore $q$ does not fork over $A$.

%Now, if $q\notin G^{00}_M$, take some model $N\supseteq M$ containing a point $d$ such that $dq\in G^{00}_M$. Let $a\models q$ such that $N\ind_M a$. Then also $a\ind_M N$ and $\tp(a/N)$ is gsg. By Lemma \ref{lem_gsgtranslate}, so is $\tp(ba/N)$. Furthermore, that type is still $\mu_M$-wide. Therefore again $p(ba)^{-1} \cap p$ is $\mu_M$-wide. The previous paragraph goes through to show that $\tp(ba/N)$ does not fork over $A$, hence $\tp(b/N)$ does not fork over $Aa$. Since $a\ind_A N$,????

It remains to see that $\tp(b/A)$ is two-sided gsg. We first show that it is generically simple. We know that $\tp(b'a/A)$ is generically simple and $b'a \ind_A Na$, hence $b'a\ind_A d\hat{~}a$. We deduce that $\tp(b'a/Ad\hat{~}a)$ is generically simple, hence so is $\tp(b/Aa)$ by Lemma \ref{lem_gensimpledcl}. Since $a\ind_A N$, $\tp(d\hat{~}a/A)$ is generically simple, by Lemma \ref{lem_descent}, $\tp(b/A)$ is generically simple.

Now assume that $b\ind_A Bc\hat{~}c'$ and we want to show that $cbc'\ind_A Bc\hat{~}c'$. By generic simplicity, $M\ind_A b$. Moving $M$ over $Ab$, we may assume that $M\ind_A Bb\hat{~}c\hat{~}c'$. As $Bc\hat{~}c' \ind_A b$, by transitivity, $MBc\hat{~}c'\ind_A b$. It follows that $b\ind_M Bc$. As $\tp(b/M)$ is gsg, $cbc' \ind_M Bc\hat{~}c'$. Now $\tp(cbc'/Mc\hat{~}c')$ is also two-sided gsg by Lemma \ref{lem_gsgtranslate}, hence by what we have already proved, it is generically simple over $A$. We conclude that $cbc'\ind_A MBc\hat{~}c'$.
\end{proof}

\begin{prop}\label{prop_gsgkiller2}
Let $A\subseteq M$ an extension base. Assume that $p\in S_G(A)$ is two-sided gsg and $q\in S_G(M)$ is f-generic. Then $q$ does not fork over $A$ and $q|_A$ is two-sided gsg.
\end{prop}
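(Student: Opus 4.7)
The plan is to follow the proof of Proposition \ref{prop_gsgkiller} almost verbatim, identifying the two places where two-sided gsg of $q$ is invoked and replacing each with an argument that uses only the two-sided gsg of $p' \in S_G(M)$ (the non-forking extension of $p$, which remains two-sided gsg) together with the fact that non-forking is preserved under definable closure.

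First I would set up the same framework: fix $p' \in S_G(M)$, let $M \prec N$ be sufficiently saturated, choose $d \in N$ with $\tp(d/A)$ generically simple and $b' := db \in G^{00}_M$ for $b \models q$ with $\tp(b/N)$ $\mu_M$-wide, and pick $a$ realizing $p' \cup b'^{-1}p'$ with $\tp(a/Nb)$ $\mu_M$-wide, so that $a, b'a \models p'|N$. The derivations $(0)$ and $(1)$ from \ref{prop_gsgkiller} go through unchanged. For step $(2)$, namely $b'a \ind_N a$, I would argue as follows: from $\tp(a/Nb)$ $\mu_M$-wide, $a \ind_N b$; by generic simplicity of $\tp(a/N) = p'|N$, $b \ind_N a$, so $\tp(b/Na)$ does not fork over $N$; since $d \in N$ and hence $b' \in \dcl(Nb) \subseteq \dcl(Na, b)$, preservation of non-forking under $\dcl$ over the base $Na$ yields $b' \ind_N a$; and since $b'a \in \dcl(Na, b')$, a second application gives $b'a \ind_N a$. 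With this in place, the conclusion $b \ind_A M$ (so $q$ does not fork over $A$) and the generic simplicity of $\tp(b/A)$ follow exactly as in \ref{prop_gsgkiller}.

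It remains to show $\tp(b/A)$ is two-sided gsg, which in \ref{prop_gsgkiller} used $\tp(b/M)$ two-sided gsg. I would replace this with a direct argument using the decomposition $b = d^{-1}(b'a) a^{-1}$ and two-sided gsg of $\tp(b'a/A) = p$. Given $B \supseteq A$ and $c, c' \in G$ with $b \ind_A Bcc'$, I would use left-extension at the extension base $A$ to find $(d^*, a^*) \equiv_{Ab} (d, a)$ with $d^* a^* \ind_A Bbcc'$; setting $\tilde b := d^* b a^*$ we have $\tp(\tilde b / A) = p$. By transitivity, combining $b \ind_A Bcc'$ with $d^* a^* \ind_A Bbcc'$ and using generic simplicity of $\tp(b/A)$ and of $\tp(d^* a^*/A)$ (the latter via Lemma \ref{lem_descent} from $\tp(d/A)$ generically simple, $a \models p'$, and $d \ind_A a$), Lemma \ref{lem_trans} yields $\tilde b \ind_A B \cup \{c, c', d^*, a^*\}$. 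Applying two-sided gsg of $p$ with translators $g := c d^{*-1}$ and $h := a^{*-1} c'$ then gives $g \tilde b h = cbc' \ind_A B \cup \{c, c', d^*, a^*\} \supseteq Bcc'$, as required. The main obstacle is this final transitivity computation establishing $\tilde b \ind_A B \cup \{c, c', d^*, a^*\}$, which requires threading the various independence relations carefully using Lemma \ref{lem_trans} and Corollary \ref{cor_gensimple}.
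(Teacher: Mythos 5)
There is a genuine gap, and it occurs exactly at the point where the hypothesis you are trying to drop was doing real work. In your replacement of step $(2)$ you argue: $b'\ind_N a$, and $b'a\in\dcl(Na,b')$, hence $b'a\ind_N a$ by ``preservation of non-forking under $\dcl$''. That preservation principle only applies to elements of $\dcl(Nb')$, i.e.\ to definable functions of the left-hand tuple over the \emph{base} $N$; it does not apply to elements of $\dcl(Nab')$, because the parameter $a$ sits on the right-hand side of the independence you are trying to establish. (The degenerate instance $a\in\dcl(Na,b')$ together with $a\nind_N a$ for $a\notin\acl(N)$ shows the principle you invoke is false.) The implication $b'\ind_N a\Rightarrow b'a\ind_N a$ is precisely an instance of the right-multiplication half of the two-sided gsg property of $\tp(b'/N)$ --- if it followed from non-forking calculus alone, the gsg condition would be vacuous for generically simple types. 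In the proof of Proposition \ref{prop_gsgkiller} this step is justified by Lemma \ref{lem_gsgtranslate} applied to $\tp(b/N)$, which requires $q$ to be two-sided gsg; that is exactly the hypothesis Proposition \ref{prop_gsgkiller2} removes, so your argument is circular at this point. (Your final paragraph has a milder instance of the same issue in establishing $\tilde b\ind_A d^*\hat{~}a^*$, though that one can be patched by transporting the already-proved fact $b'a\ind_A Na$ along the conjugation $(d^*,a^*)\equiv_{Ab}(d,a)$.)

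The paper's actual proof does not redo the argument of Proposition \ref{prop_gsgkiller}; it reduces to it by a translation trick, and this is the idea your proposal is missing. Take $a\models p$ with $a\ind_A M$ and $b\models q$ with $\tp(b/Ma)$ $\mu_M$-wide, so that $ab\ind_M a$ and, using that $p$ is two-sided gsg and generically simple, $ab\ind_A Mb$. Choosing a model $N\supseteq Mb$ with $a\ind_A N$, Lemma \ref{lem_gsgtranslate} makes $\tp(ab/N)$ two-sided gsg --- here the translate is by the element $b\in N$, so only the gsg property of $p$ is needed, never that of $q$. Proposition \ref{prop_gsgkiller} then applies to $\tp(ab/N)$, giving $ab\ind_A Ma$, whence $b\ind_A Ma$ by gsg of $\tp(ab/A)=p$, and the two-sided gsg of $q|_A$ follows by translating back: $\tp(b/Ma)=\tp(a^{-1}\cdot ab/Ma)$ is two-sided gsg by Lemma \ref{lem_gsgtranslate} again, and one more application of Proposition \ref{prop_gsgkiller} concludes. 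You would need to adopt this (or some other) mechanism for manufacturing a two-sided gsg type over a model containing $b$ out of $p$ alone; the verbatim-adaptation strategy does not close.
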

\begin{proof}
Let $a\models p$ such that $a\ind_A M$ and let $b\models q$ with $\tp(b/Ma)$ $\mu_M$-wide. Then $ab\ind_M a$ and also $ab\ind_A Mb$. Since $b\ind_M a$ and $\tp(a/M)$ is generically simple over $A$, we have $a\ind_A Mb$. If $N$ is a model containing $Mb$ such that $a \ind_A N$, then by Lemma \ref{lem_gsgtranslate}, $\tp(ab/N)$ is gsg. By Proposition \ref{prop_gsgkiller}, it does not fork over $A$ and is gsg over $A$. Hence $ab\ind_M a$ implies $ab \ind_A Ma$ and then $b\ind_A Ma$ by gsg. In particular, $b\ind_A M$ and $q$ does not fork over $A$. It remains to see that $q|_A$ is two-sided gsg. We know that $\tp(ab/Ma)$ is two-sided gsg, and then so is $\tp(b/Ma)$. By the previous proposition, $\tp(b/A)$ is two-sided gsg.
\end{proof}

\begin{cor}
Assume that $G$ has a gsg type and let $A$ be an extension base. Then any f-generic type of $G$ is non-forking over $A$.
\end{cor}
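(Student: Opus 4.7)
The plan is to reduce to Proposition~\ref{prop_gsgkiller2}, whose hypothesis requires a two-sided gsg type over $A$ together with an f-generic type over a model $M\supseteq A$. So the main work is to manufacture a two-sided gsg type over $A$ itself from the bare assumption that a gsg type exists somewhere in the monster.

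Let $p_0\in S_G(A_0)$ be the given gsg type. I would first arrange, possibly after enlarging $A$ by the small set $A_0$ (or passing through a small model), that $A_0\subseteq A$ with $A$ still an extension base. Then I take a non-forking extension $p\in S_G(A)$ of $p_0$, which remains gsg by the remark following the definition of gsg (the latter being an application of Lemma~\ref{lem_trans}). Choosing $A$-independent realizations $a,b\models p$ (possible since $A$ is an extension base), Lemma~\ref{lem_bigsgexist} then yields that $\tp(ab^{-1}/A)$ is a two-sided gsg type over $A$.

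Given an f-generic $q\in S_G(B)$, I take a model $M\supseteq A\cup B$ and extend $q$ to an f-generic $q^*\in S_G(M)$. This extension is available because an $L(B)$-formula that is f-generic over $B$ remains f-generic over the larger base $M$ (a $G$-dividing sequence over $M$ is also $B$-indiscernible, so it witnesses $G$-dividing over $B$), and one can complete $q$ inside the $\mu_M$-wide formulas using the strong f-generics provided by the gsg assumption. Applying Proposition~\ref{prop_gsgkiller2} to $\tp(ab^{-1}/A)\in S_G(A)$ and $q^*\in S_G(M)$ gives that $q^*$ does not fork over $A$, and hence neither does $q$.

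The main obstacle is ensuring that the two-sided gsg type lives over $A$ itself, rather than over some larger model. If the straightforward enlargement $A\cup A_0$ fails to be an extension base, one has to work a bit harder, for instance going through an intermediate model and arguing that the two-sided gsg property descends back to $A$; this is the only step that is not automatic from what has already been proved in the section.
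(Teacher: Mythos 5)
You have correctly identified the right tool (Proposition~\ref{prop_gsgkiller2}, fed with a two-sided gsg type produced via Lemma~\ref{lem_bigsgexist}), but the step you flag as ``the only one that is not automatic'' is in fact a genuine gap, and the difficulty is not the one you name. The problem with replacing $A$ by $A\cup A_0$ is not that the union might fail to be an extension base: even if it is one, Proposition~\ref{prop_gsgkiller2} applied over $A\cup A_0$ only yields that $q^*$ does not fork over $A\cup A_0$, and non-forking over a larger set does not imply non-forking over $A$. Nor can you expect to ``descend the two-sided gsg property back to $A$'': producing a two-sided gsg type over an arbitrary extension base $A$ is essentially the content of Propositions~\ref{prop_gsgkiller} and~\ref{prop_gsgkiller2} themselves, whose input already requires such a type over $A$, so this route is circular. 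As written, your argument proves a different (and weaker, in the relevant sense) statement.

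The paper's proof supplies exactly the missing idea. Writing $B\supseteq A$ for a set carrying a gsg type and $p\in S_G(N)$ for the f-generic type with $a\models p$, it first applies Proposition~\ref{prop_gsgkiller2} to see that $p$ is gsg over $B$; it then chooses a conjugate $B'\equiv_A B$ with $B'\ind_A aN$, so that $B'$ still carries a (two-sided, by Lemma~\ref{lem_bigsgexist}) gsg type. Generic simplicity of $\tp(a/N)$ over $B$ gives $a\ind_B NB'$, which puts one in a position to apply Proposition~\ref{prop_gsgkiller2} over the base $B'$ and obtain $a\ind_{B'} N$; finally, transitivity with $B'\ind_A N$ yields $a\ind_A N$. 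It is this conjugation-plus-transitivity step that bridges the gap between the set where the gsg type lives and the arbitrary extension base $A$, and it has no counterpart in your proposal. (Your preliminary reduction extending $q$ from $B$ to an f-generic type over a model $M$ is a reasonable, if secondary, point; the paper simply takes the f-generic type to be over a model from the start.)
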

\begin{proof}
Assume that $G$ has a gsg type over some $B\supseteq A$. Let $p\in S_G(N)$ be an f-generic type. By Proposition \ref{prop_gsgkiller2}, it is gsg over $B$. Let $a\models p$ and take $B'\ind_A aN$, $B'\equiv_A B$. As $\tp(a/N)$ is generically simple over $B$, we have $a\ind_B NB'$. As $B'$ is a conjugate of $B$, there is a gsg type over $B'$ and therefore also a two-sided gsg type. Proposition \ref{prop_gsgkiller2} implies that $a\ind_{B'} N$. As $B'\ind_A N$, we have $a\ind_A N$.
\end{proof}

\begin{question}
Given $G$ an arbitrary definable groups in an NTP$_2$ theory, assume that $p\in S_G(A)$ is generically simple and f-generic, then is it gsg?
\end{question}

\bibliographystyle{alpha}
\bibliography{tout}

\end{document}